\documentclass{article}
\usepackage[latin1]{inputenc}
\usepackage{amsmath}
\usepackage{amssymb}
\usepackage{amsthm}
\usepackage{mathrsfs}
\usepackage{lscape}
\usepackage{euscript}
\usepackage{latexsym,enumerate,color}
\usepackage{authblk}

\allowdisplaybreaks

\begin{document}

\theoremstyle{plain} \newtheorem{Thm}{Theorem}
\newtheorem{prop}[Thm]{Proposition}
\newtheorem{lem}[Thm]{Lemma}
\newtheorem{step}[Thm]{Step}
\newtheorem{de}[Thm]{Definition}
\newtheorem{obs}[Thm]{Observation}
\newtheorem{cor}[Thm]{Corollary}
\newtheorem{remark}[Thm]{Remark}

\newcommand{\kor}{\mathring }
\newcommand{\Korz}{\mathring{\mathbb{Z}} }
\newcommand{\korx}{\mathring{x} }
\newcommand{\res}{\restriction }
\newcommand{\kalapz }{\hat{\mathbb{Z}} }
\theoremstyle{remark} \newtheorem*{pf}{Proof}
\renewcommand\theenumi{(\alph{enumi})}
\renewcommand\labelenumi{\theenumi}
\renewcommand{\qedsymbol}{}
\renewcommand{\qedsymbol}{\ensuremath{\blacksquare}}

\newcommand{\rad}{{\sf Rad}}
\def\und{\underline}
\def\Q{\mathbb{Q}}
\def\Z{\mathbb{Z}}
\def\C{\mathbb{C}}
\newcommand{\Ld}{|\!\!|}
\newcommand{\Rd}{|\!\!|}
\newcommand{\dsl}{/\!\!/}
\newcommand{\cala}{\mathcal }
\newcommand{\etha}{\alpha}
\newcommand{\embedding}{\hookrightarrow}
\newcommand{\semidirect}{\rtimes}
\newcommand{\sr}{S-ring }
\newcommand{\pp}{\mbox {\bf P}\,}
\newcommand{\pr}{\mbox {\bf Proof.\ }}
\newcommand{\aaa}{\mbox {\cala A}}
\newcommand{\pset}{\mbox {\{p_1,...,p_m\}}}
\newcommand{\zn}{\mbox {\bf Z}_n}
\newcommand{\pg}{permutation group }
\newcommand{\pgs}{permutation groups }
\newcommand{\srg}[3]{${\cala #1}=<{\underline #2}_0,...,{\underline
#2}_#3>$}
\newcommand{\isom}{\cong}
\newcommand {\sth}{ such that }
\newcommand {\imp}{ imprimitivity system }
\newcommand {\tram}{ $V(C_n,G_1)$ }
\newcommand {\pair}[1] {#1_1 #1_2^{-1}}
\newcommand {\cnp}{ C_{n/p} }
\newcommand {\normal}{\trianglelefteq}
\newcommand {\congruence} {\equiv}
\newcommand {\wreath}{\wr}
\newcommand {\longline}{\left|\right.}
\newcommand {\Sup}[1]{{\sf Sup}({#1})}
\newcommand {\Aut}[1]{{\sf Aut}({#1})}
\newcommand {\Out}[1]{{\sf Out}({#1})}
\newcommand {\Inn}[1]{{\sf Inn}({#1})}
\newcommand{\aalg}{{\sf Aut}_{alg}}
\newcommand {\New}[1]{{\sl #1}}
\newcommand {\Sym}[1]{{\sf Sym }(#1)}
\newcommand {\encodes}{\ encodes\ }
\newcommand {\tran}[1]{{#1}^*}
\newcommand {\twomin}{({\sf Sup}_2(H_R))^{min}}
\newcommand{\Log}[2]{{\mbox{\boldmath$\eta$\unboldmath}}_{#1}(#2)}
\newcommand {\Bsets}[1]{{\sf Bsets}(#1)}
\newcommand {\tranjugate}{tranjugate\ }
\newcommand {\cclosed}{ complete \ }
\newcommand{\CCI}{ {${\rm CI}^{(2)}$} }
\newcommand{\fp}{{\Zset}_p}
\newcommand{\HH}{{\sf H}}
\newcommand{\GG}{{\sf G}}
\newcommand{\m}[1]{\overline{#1}}
\newcommand{\Ind}{[\m{0},\m{p-1}]}
\newcommand{\cay}{{\sf Cay}}
\newcommand{\laa}{\langle\!\langle}
\newcommand{\raa}{\rangle\!\rangle}
\newcommand{\lla}{\langle\!\langle}
\newcommand{\rra}{\rangle\!\rangle}
\newcommand{\A}{{\cala A}}
\newcommand{\aut}[1]{{\sf Aut}(#1)}
\newcommand{\bsets}[1]{\Bsets{#1}}
\newcommand{\myplus}{\biguplus} %{\square\!\!\!\!\! {\small +}}%{*}
\newcommand{\Stab}{{\sf Stab}}
\newcommand{\spn}{{\sf Span}}
\newcommand{\Ss}{{\cala S}}
\newcommand{\mymid}{\,\,\vline\,\,}
\newcommand{\sfM}{{\sf M}}
\newcommand{\cM}{{\mathcal M}}
\newcommand{\F}{{\mathbb F}}
\newcommand{\cR}{{\mathcal R}}
\newcommand{\cP}{{\mathcal P}}
\newcommand{\cF}{{\mathcal F}}
\newcommand{\cL}{{\mathcal L}}
\newcommand{\cT}{{\mathcal T}}
\newcommand{\cB}{{\mathcal B}}
\newcommand{\cC}{{\mathcal C}}
\newcommand{\cD}{{\mathcal D}}
\newcommand{\e}{{\sf e}}
\newcommand{\fix}{{\sf Fix}}
\newcommand{\trace}{{\rm trace}}

\newcommand{\sym}[1]{{\sf Sym }(#1)}
\newcommand{\cel}{{\rm Cel}}
\newcommand{\lm}{\lambda}
\newcommand{\SP}{{\rm Sp}}
\newcommand{\ot}[1]{{\bf O}^\theta(#1)}
\newcommand{\wt}[1]{\widetilde{#1}}
\newcommand{\wh}[1]{\widehat{#1}}
\newcommand{\btwn}[2]{{}_{#1}\!S_{#2}}
\newcommand{\haho}{half-homogeneous}
\newcommand{\coco}{coherent configuration}
\newcommand{\cocos}{coherent configurations}
\newcommand{\irr}{{\rm Irr}}
\newcommand{\rk}{{\rm rk}}
\newcommand{\cS}{{\mathcal S}}
\newcommand{\ea}{\Z_2^3}
\newcommand{\sg}[1]{\langle{#1}\rangle}
\newcommand{\ovr}[1]{\overline{#1}}
\newcommand{\tr}{T}
\newcommand{\vpphi }{\varphi}
\newcommand{\ord}{{\rm ord}}
\newcommand{\cyc}{\ cyclotomic}
\newcommand{\N}{{\sf N}}
\newcommand{\cA}{{\mathcal A}}
\newcommand{\Ker}{{\rm Ker}}
\newcommand{\End}{{\rm End}}
\newcommand{\Fr}{{\rm Fr}}
\newcommand{\fq}{{\mathbb F}_q}
\newcommand{\sig}{\sigma }
\newcommand{\cN}{{\mathcal N}}
\newcommand{\veps}{\varepsilon}
\newcommand{\cd}{{\rm c.d.}}
\newcommand{\mtrx}[4]{\left(\begin{array}{cc} #1 & #2 \\ #3 & #4\end{array}\right)}
\newcommand{\irreps}{ irreducible representations }
\newcommand{\core}{{\sf core}}
\def\f{\EuScript}
\newcommand{\comment}[1]{}
\newcommand{\CI}{CI${}^{(2)}$}
\newcommand{\fA}{\mathfrak{A}}
\newcommand{\fB}{\mathfrak{A}}
\newcommand{\fS}{\mathfrak{S}}
\newcommand{\fT}{\mathfrak{T}}
\newcommand{\iso}{\mathsf{Iso}}

\def\Zq{\mathbb{Z}_q}
\def\Zp{\mathbb{Z}_p^3}
\def\Zpq{\mathbb{Z}_q \times \mathbb{Z}_p^3 }
\def\hatZpq{\hat{\mathbb{Z}}_q \times \hat{\mathbb{Z}}_p^3}
\def\ringZpq{\mathring{\mathbb{Z}}_q \times \mathring{\mathbb{Z}_p^3}}
\def\stb{,\ldots ,}
\def\bfs{\bfseries}

\title{The Cayley isomorphism property for $\Z_p^3 \times \Z_q$}
\author[1]{Mikhail Muzychuk}
\author[2]{G\'abor Somlai}
\affil[1]{Ben Gurion University of the Negev}
\affil[2]{E\"otv\"os Lor\'and University, Departement of Algebra and Number Theory}
\date{}
\setcounter{Maxaffil}{0}
\maketitle

\begin{abstract}
For every pair of distinct primes $p$, $q$ we  prove that $\mathbb{Z}_p^3 \times \mathbb{Z}_q$  is a CI-group with respect to binary relational structures. 
\end{abstract}

\section{Introduction}\label{intro}
Let $H$ be a finite group and $S$ a subset of $G$. The \textit{Cayley digraph}
$Cay(H,S)$ is defined by having the vertex set $H$ and $g$ is adjacent
to $h$ if and only if $g h^{-1} \in S$. The set $S$ is called the
\textit{connection set} of the Cayley graph $Cay(H,S)$. 
An undirected Cayley  digraph will be referred to as a Cayley graph. Recall that a Cayley digraph
$Cay(H,S)$ is undirected if and only if $S=S^{-1}$, where $S^{-1} =
\left\{ \, s^{-1}  \mid s \in S \, \right\}$.
Every right multiplication via elements of $H$ is an automorphism of
$Cay(H,S)$, so the automorphism group of every Cayley graph over $H$
contains a regular subgroup denoted by $\hat{H}$ isomorphic to $H$. Moreover, this property characterises the Cayley graphs of $H$.

By a \textit{binary Cayley structure} (or a \textit{colored Cayley graph}) over $H$ we mean an ordered tuple $(\cay(H,S_1),...,\cay(H,S_r))$ of Cayley graphs, which we will always abbreviate as 
$\cay(H,(S_1,...,S_r))$. An isomorphism between two tuples 
$\cay(H,(S_1,...,S_r))$ and $\cay(H,(T_1,...,T_r))$ is a permutation 
$f\in\sym{H}$ satisfying $\cay(H,S_i)^f = \cay(H,T_i),i=1,...,r$. 
With this definition, the automorphism group of $\cay(H,(S_1,...,S_r))$ coincides with $\bigcap_{i=1}^r \cay(H,S_i)$.

It is clear that every automorphism $\mu$ of the group $H$ induces an isomorphism between $\cay(H,(S_1,...,S_r))$ and 
$\cay(H,(S_1^{\mu },...,S_r^{\mu }))$.
Such an isomorphism is called a \textit{Cayley isomorphism}. A colored Cayley digraph $Cay(G,\fS)$, where $\fS \in \mathcal{P}(H)^r$ has the \textit{CI-property} (or is a \textit{colored CI-digraph}) if, for each $\fT\in\mathcal{P}(H)^r$ the colored Cayley digraph $\cay(H,\fT)$ is isomorphic to
$Cay(G,\fS)$ if and only if they are \textit{Cayley isomorphic}, i.e.  there is an automorphism 
$\mu$ of $H$ such that $\fS^{\mu} =\fT$. In this case we say that $H$ has the CI-property for binary relational structures, or, it is a CI$^{(2)}$-group.
Furthermore, a group $H$ is called a DCI-group if every Cayley digraph of $H$ is a CI-digraph and it is called a CI-group if every undirected Cayley digraph of $H$ is a CI-graph. %If every colored Cayley digraph over $H$ has the CI-property, then we say that $H$ is a \CI-group.

Investigation of the isomorphism problem of Cayley graphs started with \'Ad\'am's conjecture \cite{adam}. Using our terminology, it was conjectured that every cyclic group is a DCI-group. This conjecture was first disproved by Elspas and Turner \cite{ElspasTurner} for directed Cayley graphs of $\mathbb{Z}_8$ and for undirected Cayley graphs of $\mathbb{Z}_{16}$.

Analyzing the spectrum of circulant graphs Elspas and Turner \cite{ElspasTurner}, and independently Djokovi\'{c} \cite{Djokovic} proved that every cyclic group of order $p$ is a CI-group if $p$ is a prime. Also, a lot of research was devoted to the investigation of circulant graphs. One important result for our investigation is that $\mathbb{Z}_{pq}$ is a DCI-group for every pair of primes $p<q$. This result was first proved by Alspach and Parsons \cite{AlspachParsons} and independently by P\"oschel and Klin \cite{KlinPochel4} using the theory of Schur rings, and also by Godsil \cite{godsil}. Finally, Muzychuk \cite{Muzysquarefree,Muzycyclic1} proved that a cyclic group $\mathbb{Z}_n$ is a DCI-group if and only if $n=k$ or $n=2k$, where $k$ is square-free. Furthermore, $\mathbb{Z}_n$ is a CI-group if and only if $n$ is as above or $n=8,9,18$.

It is easy to see that every subgroup of a (D)CI-group is also a (D)CI-group so it is natural to investigate $p$-groups which are the Sylow $p$-subgroups of a finite group. Babai and Frankl \cite{babaifrankl} proved that if $H$ is a $p$-group, which is a CI-group, then $H$ can only be elementary abelian $p$-group, the quaternion group of order $8$ or one of a few cyclic groups $\mathbb{Z}_4$, $\mathbb{Z}_8$, $\mathbb{Z}_9$ or $\mathbb{Z}_{27}$.
The known results about cyclic groups show that $\mathbb{Z}_{27}$ is not a CI-group and $\mathbb{Z}_9,\mathbb{Z}_8$ are not DCI-group. Babai and Frankl also asked whether every elementary abelian $p$-group is a (D)CI-group.

The cyclic group of order $p$, which is a CI-group, can also be considered as an elementary abelian $p$-group of rank $1$. Currently, the best general result is due to Feng and Kov\'acs \cite{Zp5} who proved that $\mathbb{Z}_p^5$ is a CI-group for every prime $p$. The proof using elementary tools for $\Z_p^4$ is due to Morris \cite{morris}. It was shown by Somlai \cite{S11} that $\Z_p^r$ is not a DCI-group if $r\geq 2p+3$.

Severe restriction on the structure of DCI-groups was given by Li and Praeger and then a more precise list of candidates for DCI-groups was given by Li, Lu and P\'alfy \cite{liluppp}.
New family of CI-groups was found by Kov\'acs and Muzychuk \cite{KovacsMuzychuk}, that is, $\mathbb{Z}_{p}^2 \times \mathbb{Z}_q$ is a DCI-group for every prime $p$ and $q$. One example of DCI-groups connected to the question treated in this paper is $\mathbb{Z}_2^3 \times \mathbb{Z}_p$, see \cite{Dobson}.
It was also conjectured in \cite{KovacsMuzychuk}, that the direct product of DCI-groups of coprime order is a DCI-group\footnote{The cited paper deals in fact with DCI-groups while it talks about CI-groups.}. Note that the conjecture is not true for CI-groups as it was shown recently by T. Dobson \cite{D18}. Dobson also proved that the product of relatively prime order elementary abelian groups DCI-groups is a DCI-group by posing serious assumption on the prime divisors of the order of the group \cite{Dobson}. 

In this paper we prove the following result which supports this conjecture.
\begin{Thm}\label{fotetel}
For every pair of primes $p \ne q$, the group $\mathbb{Z}_{p}^3 \times \mathbb{Z}_q$ is a DCI-group.
\end{Thm}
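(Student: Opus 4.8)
The standard reduction (Babai's criterion) shows that to prove $H:=\Z_p^3\times\Z_q$ is a DCI-group — indeed a $\mathrm{CI}^{(2)}$-group — it suffices to verify that every S-ring $\cA$ over $H$ is a \emph{CI-S-ring}: whenever $K\le\aut{\cA}$ is a regular subgroup with $K\cong H$, then $K$ is conjugate to $\hat H$ inside $\aut{\cA}$. Indeed, the automorphism group of any colored Cayley digraph $\cay(H,\fS)$ is $2$-closed and equals $\aut{\cA}$ for $\cA=V(\aut{\cay(H,\fS)},\hat H)$, and a second regular copy of $H$ in $\aut{\cA}$ is precisely a permutation $\sigma$ with $\hat H^\sigma\le\aut{\cay(H,\fS)}$. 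So I fix such a pair $\cA,K$; replacing $\cA$ by $V(\langle\hat H,K\rangle,\hat H)$ we may assume $\langle\hat H,K\rangle\le\aut{\cA}$, and the goal is to produce $g\in\aut{\cA}$ with $K^g=\hat H$.

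The first step exploits $\gcd(p,q)=1$ together with the structure theory of S-rings over abelian groups. If $\cA$ is a nontrivial generalized wreath (wedge) product with respect to some proper nontrivial $\cA$-subgroup $U\le H$, then its upper constituent is an S-ring over the section $H/U$ and its lower constituent an S-ring over some $W$ with $U\le W\le H$; every section of $H$ other than $H$ itself has the form $\Z_p^a\times\Z_q^b$ with $a\le 3$, $b\le 1$ and $(a,b)\ne(3,1)$, hence is a known DCI-group, and a by-now-standard gluing argument along the wreath decomposition reconstructs a conjugating element. If instead $\cA$ decomposes as a tensor product $\cA_p\otimes\cA_q$ of an S-ring over the Sylow $p$-subgroup and one over the Sylow $q$-subgroup, then — using the coprimality to pin down the relevant Hall subgroups of $\aut{\cA}$ by a Schur--Zassenhaus type argument — the regular subgroup $K$ splits as $K_p\times K_q$ with $K_p$ a regular $\Z_p^3$ in $\aut{\cA_p}$ and $K_q$ a regular $\Z_q$ in $\aut{\cA_q}$; the DCI-property of $\Z_p^3$ and of $\Z_q$ then conjugates each factor and hence $K$.

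The remaining, and principal, case is when $\cA$ is indecomposable in the above sense. Then the quotient structures of $\cA$ force $\aut{\cA}$ to act primitively on the two natural systems of blocks, of sizes $q$ and $p^3$; Burnside's theorem on transitive groups of prime degree handles the degree-$q$ action, while the degree-$p^3$ action is constrained by the classification of primitive — indeed $2$-transitive — groups of degree $p^3$ that contain a regular elementary abelian subgroup. In dimension $3$ this list is short: essentially the affine groups $\Z_p^3\rtimes G_0$ with $G_0$ transitive linear on $\F_p^3\setminus\{0\}$, and by Hering's theorem $G_0$ is then either a subgroup of $\Gamma L(1,p^3)$ or contains $\mathrm{SL}(3,p)$ (there being no symplectic, $G_2$, or sporadic possibilities when the dimension is $3$). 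For each entry I would enumerate the S-rings $\cA$ over $H$ realising it and verify directly that $K$ is $\aut{\cA}$-conjugate to $\hat H$, again using that each of these groups has a single conjugacy class of regular subgroups isomorphic to $\Z_p^3\times\Z_q$ and leaning on the DCI-property of $\Z_p^3$ and on the normaliser structure of affine and semilinear groups.

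The main obstacle is exactly this indecomposable case. Over $\Z_p^3$ there are field- and classical-group-type S-rings — the cyclotomic S-rings attached to $\F_{p^3}$ and those coming from irreducible subgroups of $\Gamma L(1,p^3)$ — that have no analogue in the already-settled situation of $\Z_p^2\times\Z_q$, and the cyclic direction $\Z_q$ can be braided into them, for instance when $q\mid p^3-1$ and $\widehat{\Z_q}$ lies inside the Singer cycle $\F_{p^3}^\ast$. Disentangling these braided S-rings, excluding spurious extra regular subgroups in each, and dealing with the exceptional small values $p^3\in\{8,27\}$ is where the bulk of the work lies; by contrast the wreath and tensor reductions and the coprime gluing above are routine and follow the template established for $\Z_p^2\times\Z_q$.
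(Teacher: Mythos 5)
Your reduction via Babai's criterion matches the paper, but the core of your plan has a genuine gap: the ``remaining, and principal, case'' of an indecomposable S-ring, which you propose to attack through Burnside's theorem, Hering's classification of $2$-transitive affine groups of degree $p^3$, and a case-by-case enumeration of cyclotomic/Singer-type S-rings, is never actually argued --- it is a program (``I would enumerate \dots and verify directly''), resting on unproved assertions such as the $2$-transitivity of the degree-$p^3$ action and the uniqueness of the conjugacy class of regular subgroups in each candidate group. The paper shows that this case does not arise at all: because $q$ is a \emph{simple} prime divisor of $|H|$, Wielandt's Theorem 25.4 kills primitive S-rings over $H$, and the analysis of $M_q$-invariant basic sets (Proposition~\ref{250115a}) yields that every relevant transitivity module is either a nontrivial generalized wreath product with respect to $Q_1\le P_1Q_1$ (Corollary~\ref{wedge}, when $P_1Q_1\ne H$) or a star product $\mathfrak{A}_{P_1}\star\mathfrak{A}_{Q_1}$ (Propositions~\ref{110519a} and~\ref{220519b}, when $P_1Q_1=H$, using $\preceq_{\hat{H}}$-minimality to force $\mathfrak{A}_{H/P_1}\cong\Z[C_q]$). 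So the ``braided'' cyclotomic S-rings you worry about never have to be classified; your plan spends its main effort on a case the correct structure theory eliminates, and leaves that effort incomplete.

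Conversely, the case you dismiss as routine is where the real work lies. CI-ness of the two constituents of a generalized wreath product does \emph{not} by itself allow one to ``reconstruct a conjugating element''; one needs the Kov\'acs--Ryabov compatibility condition $\mathrm{Aut}_{U/L}(\mathfrak{A}_{U/L})=\mathrm{Aut}_{U}(\mathfrak{A}_{U})^{U/L}\,\mathrm{Aut}_{H/L}(\mathfrak{A}_{H/L})^{U/L}$ (Theorem~\ref{kovacsryabov}), and verifying it here requires the classification of Schurian $p$-S-rings over $C_p^3$, the exclusion of $\mathfrak{B}_6$ via minimality, and the order estimate on $\mathrm{Aut}_{C_p^2}(\Z[C_p]\wr\Z[C_p])$ inside a Sylow $p$-subgroup of $GL_2(p)$. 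Also, your tensor-product case with a Schur--Zassenhaus splitting only covers direct decompositions; what actually occurs is the more general star product with possibly nontrivial $P_1\cap Q_1$, for which one needs the Hirasaka--Muzychuk star-product theorem (Theorem~\ref{thmstar}) extended to $\mathcal{E}$-groups. As written, your argument would not close either the wreath case or the case you call principal.
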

In fact we prove here a more general fact: the above group is a CI${}^{(2)}$-group.
Our paper is organized as follows. In section \ref{sec1} we introduce the basic notation from Schur rings theory which are needed in this paper. In section \ref{sec2} we prove some general results about Schur rings over abelian groups of special order.  Finally, Section \ref{nagy} contains the proof of Theorem \ref{fotetel}.

\section{Schur rings}\label{sec1}

The result below is a direct consequence of Babai's lemma \cite{babai}.
\begin{lem}\label{babai}
A colored Cayley graph $\cay(H,\fS), \fS\in\mathcal{P}(H)^r$ has the CI-property if and only if any $H$-regular subgroup\footnote{An $H$-regular subgroup is any regular subgroup of the symmetric group isomorphic to $H$.} of the full automorphism group $\aut{\cay(H,\fS)}$ is conjugate to $\hat{H}$ inside $\aut{\cay(H,\fS)}$.
\end{lem}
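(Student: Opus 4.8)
The plan is to derive the statement from Babai's criterion, which characterises when two colored Cayley structures on a group $H$ are isomorphic. Recall that Babai's lemma says the following: two colored structures $\cay(H,\fS)$ and $\cay(H,\fT)$ are isomorphic if and only if $\hat H$ and $\hat H^\sigma$ are conjugate inside $\aut{\cay(H,\fT)}$ for some $\sigma\in\sym{H}$ realising the isomorphism; equivalently, $\cay(H,\fS)\cong\cay(H,\fT)$ iff there is a permutation conjugating one regular copy of $H$ to the other within the common automorphism group. First I would recall that every permutation $\sigma\in\sym{H}$ with $\cay(H,\fS)^\sigma=\cay(H,\fT)$ transports $\hat H$ (which lies in $\aut{\cay(H,\fS)}$, since right translations are automorphisms of every Cayley structure) to $\hat H^\sigma\le\aut{\cay(H,\fT)}$, so $\aut{\cay(H,\fT)}$ contains at least two $H$-regular subgroups, namely $\hat H$ and $\hat H^\sigma$.

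The argument then splits into the two implications. For the ``only if'' direction, suppose $\cay(H,\fS)$ has the CI-property and let $K\le\aut{\cay(H,\fS)}$ be any $H$-regular subgroup. Since $K$ is regular and isomorphic to $H$, we may pick a bijection identifying the vertex set with $H$ so that $K$ becomes $\hat H$ acting on a relabelled structure $\cay(H,\fT)$; concretely, there is $\sigma\in\sym{H}$ with $\hat H^\sigma=K$, and setting $\fT:=\fS^{\sigma^{-1}}$ componentwise (i.e.\ $T_i=\cay(H,S_i)^{\sigma^{-1}}$ read as a connection set) we get $\cay(H,\fT)^\sigma=\cay(H,\fS)$, so the two colored structures are isomorphic. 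By the CI-property there is $\mu\in\aut H$ with $\fS^\mu=\fT$, and the Cayley isomorphism induced by $\mu$ together with $\sigma$ yields an element of $\aut{\cay(H,\fS)}$ conjugating $\hat H$ to $K$; one checks this because $\mu$ normalises $\hat H$ inside $\sym H$. For the ``if'' direction, assume every $H$-regular subgroup of $\aut{\cay(H,\fS)}$ is conjugate to $\hat H$ there, and let $\cay(H,\fT)$ be any colored structure with $\cay(H,\fT)\cong\cay(H,\fS)$ via some $\sigma$. Then $\hat H^\sigma\le\aut{\cay(H,\fS)}$ is $H$-regular, hence $\hat H^\sigma=\hat H^g$ for some $g\in\aut{\cay(H,\fS)}$; the permutation $g^{-1}\sigma$ then normalises $\hat H$, so it induces a group automorphism $\mu$ of $H$, and tracing through the action shows $\fS^\mu=\fT$, which is exactly the Cayley-isomorphism conclusion.

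The only genuinely delicate point is the standard fact that the normaliser of $\hat H$ in $\sym H$ is the holomorph $\hat H\rtimes\aut H$, so that a permutation normalising $\hat H$ is, up to a right translation, an honest group automorphism of $H$; this is what converts ``conjugate to $\hat H$'' into ``related by a Cayley isomorphism''. I expect this bookkeeping — carefully composing $\sigma$, the conjugating element $g$, and the automorphism $\mu$, and verifying that the resulting $\mu$ sends each $S_i$ to $T_i$ — to be the main (though routine) obstacle; everything else is a direct unwinding of the definitions of the CI-property and of colored Cayley isomorphism, combined with Babai's lemma \cite{babai} applied to the tuple-valued structure rather than a single graph.
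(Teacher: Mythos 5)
Your argument is correct and is essentially the argument the paper has in mind: the paper offers no written proof, merely noting the lemma is a direct consequence of Babai's lemma \cite{babai}, and your two implications (relabelling a regular subgroup $K=\hat{H}^\sigma$ to produce an isomorphic colored structure $\cay(H,\fT)$, and conversely using that the normaliser of $\hat{H}$ in $\sym{H}$ is the holomorph $\hat{H}\rtimes\aut{H}$ to turn a conjugating element into a Cayley isomorphism) are exactly the standard unwinding that the citation suppresses. No gaps beyond the routine bookkeeping you already flag.
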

According to this result, in order to prove the CI-property for binary Cayley structures, it is sufficient to go through the whole set of
automorphism groups of all colored Cayley graph over $H$. This could be done using the method of Schur rings. 
Let $G:=\aut{\cay(H,\fS)},\fS=(S_1,...,S_r)$ denote the full automorphism group of a colored digraph $\cay(H,\fS)$. Its intersection with $Aut(H)$ will be denoted as $Aut_H(\cay(H,\fS))$. Let us order the orbits of $G_1$ in arbitrary way, say $O_1,...,O_t$. Since 
$\aut{\cay(H,(S_1,...,S_r))} = \aut{\cay(H,(O_1,...,O_t))}$, we have to analyze only those colored Cayley graphs which correspond to overgroups $G\leq\sym{H}$ of $\hat{H}$. It turns out that these colored Cayley graph are closely related to Schur rings.

\subsection{Schur rings over finite groups}

We start with the basic definitions \cite{wielandt}.
Given a group $H$, we denote its group algebra over the rationals as
 $\mathbb{Q}[H]$. If $S\subseteq H$, then by $\underline{S}$ we denote the element  $\sum_{s\in S} s\in\mathbb{Q}[H]$. Following \cite{wielandt} we call elements of this type {\it simple quantities}.

A subalgebra $\mathfrak{A}$ of the group ring $\mathbb{Q}[H]$ is called a \textit{Schur ring}, an \textit{S-ring} for short, if it satisfies the following conditions.
\begin{enumerate}
\item
There exists a partition $\mathcal{T}=\{ T_0, T_1, \ldots ,T_l\}$ of $H$ such that
$\mathfrak{A}$ is generated as a vector space by the elements of the following form: $\underline{T}=\sum_{t \in T}t$.
\item
 $T_0=\{e\}$.
\item For each $0\le i\le l$ the subset $T_i^{(-1)}=\{t^{-1} \mid t \in T_i \}$ belongs to $\mathcal{T}$.
\end{enumerate}
The elements of the partition $\mathcal{T}$ are called \textit{basic sets} of $\mathfrak{A}$ and $\underline{T}_i$'s are called \textit{basic quantities}. 
%We denote by $Basic(\mathfrak{A})$ the set of all basic quantities.
In what follows the notation $\bsets{\mathfrak{A}}$ will stand for $\mathcal{T}$ and any partition satisfying the above conditions will be referred to as a \textit{Schur partition}.

%An equivalent definition can also be given using the \textit{Hadamard multiplication} which is:
%\[ \sum_{g\in G}a_g g \circ \sum_{g\in G}b_g g = \sum_{g\in G}a_gb_g g.
%\]
%It is well known that the following conditions give an equivalent definition of Schur rings.
%\begin{enumerate}
%\item $\mathfrak{A}$ is an algebra with respect to both the group algebra and the Hadamard multiplication
%\item $e \in \mathfrak{A}$
%\item $\mathfrak{A}$ is closed under $ \sum_{g\in G}a_g g \mapsto \sum_{g\in G}a_g g^{-1}$.
%\end{enumerate}
%This purely algebraic definition shows that the intersection of two Schur rings is a Schur ring. Moreover, one can reconstruct the basic sets
%since $\mathfrak{A}$ is a semisimple algebra with the Hadamard multiplication whose primitive idempotents are the basic quantities.

One of the most natural examples of Schur rings are the \textit{transitivity modules}. Let $\hat{H}\leq\sym{H}$ be the right regular representation of a finite group $H$ and $G\leq\sym{H}$ its overgroup, i.e. $\hat{H}\leq G$. Then the orbits of the stabilizer $G_1$
are the basic sets of Schur ring over $H$ \cite{Schur}. Such a Schur ring will be denoted by $V(G,H)$. If $G = \hat{H} M$ for some $M\leq\aut{H}$, then the Schur ring $V(G,H)$ is called \textit{cyclotomic}. In this case, the basic sets of $V(G,H)$ coincide with the orbits of $M$. 

%The \textit{thin radical}, usually denoted by $O_{\Theta}(\mathfrak{A})$ contains those elements of $G$ which are one element basic quantities of $\mathfrak{A}$.

Every Schur partition (equivalently every S-ring) $\cT=\{ \und{T_0},...,\und{T_d}\}$ gives rise to an association scheme $\cay(H,\cT)$ whose basic graphs are the Cayley graphs $\cay(H,T), T\in\cT$. 
 Two Schur partitions (Schur rings)  $\mathfrak{A} \subseteq \mathbb{Q}[H],\mathfrak{B}\subseteq \mathbb{Q}[F]$ are called \textit{(combinatorially) isomorphic} if the corresponding association schemes are isomorphic, i.e.  there exists a bijection $f:H\rightarrow F$
which maps the basic Cayley graphs $\cay(H,S)$ for every  $S\in\bsets{\mathfrak{A}}$  bijectively onto the set 
$\{\cay(H,T)\}_{T\in\bsets{\mathfrak{B}}}$. The bijection $f$ is called a \textit{combinatorial isomorphism} between
$\mathfrak{A}$ and $\mathfrak{B}$. The isomorphism $f$ is called \textit{normalized} if $f(1_H)=1_F$. If $f$ is a 
normalized isomorphism between $\mathfrak{A}$ and $\mathfrak{B}$, then $\bsets{\mathfrak{A}}^f =\bsets{\mathfrak{B}}$.

We denote by $\iso(\mathfrak{A},\mathfrak{B})$ the set of all combinatorial isomorphisms between 
$\mathfrak{A},\mathfrak{B}$ and by $\iso_1(\mathfrak{A},\mathfrak{B})$ its subset consisting of the normalized ones. 
It is easy to see that $\iso(\mathfrak{A},\mathfrak{B}) = \hat{H}\iso_1(\mathfrak{A},\mathfrak{B}) = \iso_1(\mathfrak{A},\mathfrak{B})\hat{F}$.

Note that  $\iso(\mathfrak{A},\mathfrak{B})$ is empty if and only if  $\mathfrak{A},\mathfrak{B}$ are not combinatorially isomorphic. 

In what follows 
we write $\iso(\mathfrak{A},*)$ for the union of $\iso(\mathfrak{A},\mathfrak{B})$, where the second argument runs among all S-rings over the group $H$. As before,\\ $\iso(\mathfrak{A},*) = \hat{H}\iso_1(\mathfrak{A},*) = \iso_1(\mathfrak{A},*)\hat{H}$.

Two S-rings $\mathfrak{A}\subseteq \mathbb{Q}[H]$ and $\mathfrak{B}\subseteq \mathbb{Q}[F]$ are \textit{Cayley isomorphic} if there exists
a group isomorphism $\varphi:H\rightarrow F$ such that $\varphi(\mathfrak{A})=\mathfrak{B}$. Note that Cayley isomorphic S-rings are always combinatorially isomorphic but not vice versa.

An S-ring $\fA$ is a \textit{CI}-S-ring if for any S-ring $\fB'\subseteq\mathbb{Q}[H]$ and arbitrary $f\in\iso_1(\fA,\fB')$ there exists 
$\varphi\in\aut{H}$ such that $f(S)=\varphi(S)$ for all $S\in\bsets{\fA}$. It follows directly from the definition that an S-ring $\mathfrak{A}$ is CI-S-ring if and only if $\iso_1(\mathfrak{A},*)=\aut{\mathfrak{A}}\aut{H}$.

As an application of Babai's lemma \cite{babai} we have the following statement \cite{HirasakaMuzychukchuk}.
\begin{prop}
Let $\Gamma:=\cay(H,\Sigma)$ be a colored Cayley graph over $H$ and $G:=\aut{\Gamma}$. The following are equivalent
\begin{enumerate}
\item[{\rm (a)}] $\Gamma$ has the CI-property;
\item[{\rm (b)}] any $H$-regular subgroup of $G$ is conjugate to $\hat{H}$ in $G$;
\item[{\rm (c)}] the transitivity module $V(H,Aut(\Gamma)_1)$ is a CI-S-ring.
\end{enumerate}
\end{prop}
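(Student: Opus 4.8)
The plan is to deduce everything from Babai's lemma together with one structural identity about transitivity modules. The equivalence $(a)\Leftrightarrow(b)$ needs no work at all: applied to the colored Cayley graph $\Gamma=\cay(H,\Sigma)$, for which $\aut{\cay(H,\Sigma)}=\aut{\Gamma}=G$ by definition, Lemma~\ref{babai} says precisely that $\Gamma$ has the CI-property if and only if every $H$-regular subgroup of $G$ is conjugate to $\hat H$ in $G$. So the remaining task is $(b)\Leftrightarrow(c)$. Write $\fA:=V(H,\aut{\Gamma}_1)$, so that the basic sets of $\fA$ are exactly the orbits of $G_1$ on $H$ and $\hat H\le\aut{\fA}$. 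The first step — and the one place where a little care is needed — is the identity $\aut{\fA}=G$. On one hand, each basic set $O\in\bsets{\fA}$ is a $G_1$-orbit, and since $\hat H\le G$ the Cayley graph $\cay(H,O)$ is exactly an orbital of $G$; as $G$ preserves its own orbitals, $G\le\aut{\fA}$. On the other hand, every $S_i\in\Sigma$ is $G_1$-invariant, being the out-neighbourhood of $1_H$ in $\cay(H,S_i)$, hence is a union of basic sets of $\fA$; therefore $\aut{\fA}\le\bigcap_i\aut{\cay(H,S_i)}=\aut{\Gamma}=G$. Thus $\aut{\fA}=G$ and, in particular, $(\aut{\fA})_1=G_1$.

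Next I would set up the standard dictionary between normalized combinatorial isomorphisms out of $\fA$ and $H$-regular subgroups of $\aut{\fA}$. If $\mathfrak{B}$ is any S-ring over $H$ and $f\in\iso_1(\fA,\mathfrak{B})$, then $f$ realizes an isomorphism of the associated schemes, so $f^{-1}\aut{\mathfrak{B}}f=\aut{\fA}$; since $\hat H\le\aut{\mathfrak{B}}$ always, the subgroup $K_f:=f^{-1}\hat H f$ is an $H$-regular subgroup of $\aut{\fA}=G$. Conversely, given an $H$-regular $K\le G$, one first chooses $t\in\sym{H}$ with $t(1_H)=1_H$ and $t^{-1}\hat H t=K$ (such $t$ exists because $\hat H$ and $K$ are conjugate in $\sym{H}$, being regular representations of the same abstract group, and the base point can then be corrected by composing with a suitable element of $K$). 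Putting $f:=t$, the image scheme $\cay(H,\{f(S):S\in\bsets{\fA}\})$ has automorphism group $fGf^{-1}$, which contains $\hat H$ precisely because $f^{-1}\hat H f=K\le G$; hence it is the scheme of an S-ring $\mathfrak{B}$ over $H$, and $f\in\iso_1(\fA,\mathfrak{B})$ with $K_f=K$. This is exactly the point where one must be careful with left/right conventions and with how $t$ is built from an abstract isomorphism $H\cong K$, but all of it is routine.

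It remains to match the ``Cayley'' combinatorial isomorphisms with the ``good'' regular subgroups under this dictionary. By definition, $\fA$ is a CI-S-ring if and only if every $f\in\iso_1(\fA,*)$ agrees on $\bsets{\fA}$ with some $\varphi\in\aut{H}$. If this holds, then for any such $f$ the map $\varphi^{-1}f$ fixes each basic set, hence $\varphi^{-1}f\in(\aut{\fA})_1=G_1$, whence $K_f=f^{-1}\hat H f=(\varphi^{-1}f)^{-1}\hat H(\varphi^{-1}f)$ (using $\varphi^{-1}\hat H\varphi=\hat H$) is $G_1$-conjugate, a fortiori $G$-conjugate, to $\hat H$; since by the previous paragraph every $H$-regular subgroup of $G$ equals $K_f$ for some such $f$, statement $(b)$ follows. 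Conversely, assume $(b)$ and let $f\in\iso_1(\fA,*)$; then $K_f$ is $H$-regular in $G$, so $K_f=\tau^{-1}\hat H\tau$ for some $\tau\in G$, hence $\tau f^{-1}$ normalizes $\hat H$ and lies in $N_{\sym{H}}(\hat H)=\hat H\rtimes\aut{H}$. Writing $\tau f^{-1}=\hat a\varphi$ with $\hat a\in\hat H$ and $\varphi\in\aut{H}$, we get $f=\varphi^{-1}(\hat a^{-1}\tau)$ with $\hat a^{-1}\tau\in G$; comparing images of $1_H$ gives $\hat a^{-1}\tau\in G_1$, and as $G_1$ stabilizes every basic set, $f(S)=\varphi^{-1}(S)$ for all $S\in\bsets{\fA}$. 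Thus $\fA$ is a CI-S-ring, i.e. $(c)$ holds. Together with $(a)\Leftrightarrow(b)$ this proves the proposition. The only genuinely non-automatic ingredients are the identity $\aut{V(H,\aut{\Gamma}_1)}=\aut{\Gamma}$ and the bookkeeping in the dictionary; there is no deep obstacle here, the statement being essentially a re-packaging of Babai's lemma in the language of S-rings.
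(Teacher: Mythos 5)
Your proposal is correct and follows what is essentially the intended route: the paper itself gives no proof of this proposition, quoting it from Hirasaka--Muzychuk as an application of Babai's lemma, and your scheme --- (a)$\Leftrightarrow$(b) is literally Lemma~\ref{babai}, while (b)$\Leftrightarrow$(c) rests on the identity $\aut{V(H,G_1)}=G$ (the $2$-closedness of $G=\aut{\Gamma}$) together with the dictionary $f\mapsto K_f=f^{-1}\hat{H}f$ between normalized combinatorial isomorphisms out of $\fA$ and $H$-regular subgroups of $G$ --- is the standard one. One sentence needs repair: in (c)$\Rightarrow$(b) you argue that $\varphi^{-1}f$ ``fixes each basic set, hence lies in $(\aut{\fA})_1$''. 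Setwise stabilization of all basic sets does not by itself make a permutation an automorphism of the scheme, since it says nothing about preserving the basic graphs; as stated the inference is a non sequitur. What you need, and what does hold, is preservation of each basic Cayley graph: because $f$ is a normalized combinatorial isomorphism it maps $\cay(H,S)$ onto $\cay(H,f(S))$ as graphs, while $\varphi$ maps $\cay(H,S)$ onto $\cay(H,\varphi(S))=\cay(H,f(S))$, so the quotient permutation lies in $\aut{\cay(H,S)}$ for every $S\in\bsets{\fA}$ and hence in $G_1$; conjugating $\hat H$ by it then gives $K_f$ inside $G$, as you claim. With that one-line rewording the argument is complete; the remaining bookkeeping you wave at (adjusting the conjugator $t$ to fix $1_H$, the fact that the $f$-image of $\bsets{\fA}$ is again a Schur partition because its scheme admits $\hat H$ as color-preserving automorphisms, and $N_{\sym{H}}(\hat H)=\hat H\rtimes\aut{H}$) is indeed standard and used correctly.
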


This implies the following result.
\begin{Thm}\label{290519a} A group $H$ has a CI-property  for binary relational structures (\CI-group, for short) if and only if every transitivity module over $H$ is a CI-S-ring.
\end{Thm}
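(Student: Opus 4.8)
The plan is to unwind the two directions through the equivalences already established in the excerpt, so that the statement becomes essentially a bookkeeping exercise built on Lemma \ref{babai} and the Proposition preceding it. First I would recall that, by the discussion in Section \ref{sec1}, every colored Cayley graph $\cay(H,\fS)$ over $H$ has the same automorphism group as the colored Cayley graph $\cay(H,(O_1,\dots,O_t))$ whose colors are the orbits of $G_1$, where $G=\aut{\cay(H,\fS)}$; and these orbit-partitions are exactly the Schur partitions of the form $\bsets{V(H,G_1)}$. Thus the set of automorphism groups arising from colored Cayley graphs over $H$ coincides with the set of overgroups $G$ of $\hat H$ in $\sym{H}$ that arise as $\aut{\cay(H,\bsets{V(H,G_1)})}$, and the transitivity modules over $H$ are precisely the S-rings $V(H,G_1)$ for such $G$.

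For the forward direction, assume $H$ is a \CI-group, i.e. every colored Cayley graph over $H$ has the CI-property, and let $\fA=V(H,G_1)$ be an arbitrary transitivity module, where $G=\hat H G_1$ is an overgroup of $\hat H$. Form the colored Cayley graph $\Gamma:=\cay(H,\bsets{\fA})$ whose colors are the basic sets of $\fA$. Since $\hat H\le G\le\aut{\Gamma}$ and the colors of $\Gamma$ are the $G_1$-orbits, one checks that $\aut{\Gamma}_1$ has the same orbits as $G_1$, hence $V(H,\aut{\Gamma}_1)=\fA$; now apply part (c) of the Proposition to the CI-graph $\Gamma$ to conclude that $\fA$ is a CI-S-ring. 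For the converse, assume every transitivity module over $H$ is a CI-S-ring, and let $\Gamma=\cay(H,\fS)$ be any colored Cayley graph with $G:=\aut{\Gamma}$. Then $V(H,G_1)$ is a transitivity module, hence a CI-S-ring by hypothesis, so by part (c) of the Proposition $\Gamma$ has the CI-property. As $\Gamma$ was arbitrary, $H$ is a \CI-group.

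The only genuinely non-formal point is the identification, in the forward direction, of $\aut{\cay(H,\bsets{\fA})}_1$-orbits with $G_1$-orbits: a priori passing to the full automorphism group of the colored graph could enlarge the point stabilizer and coarsen the orbit partition. The resolution is standard: the color classes of $\cay(H,\bsets{\fA})$ are exactly the basic sets of $\fA$, every automorphism fixing $1_H$ must permute the color classes among themselves while fixing the class $\{1_H\}$, hence maps each basic set (as a set) to itself; therefore the orbits of $\aut{\cay(H,\bsets{\fA})}_1$ refine those of $G_1$, while trivially they are refined by them since $G_1$ already preserves all colors, giving equality and hence $V(H,\aut{\cay(H,\bsets{\fA})}_1)=\fA$. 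Once this is in place the theorem follows immediately by quoting parts (a) and (c) of the Proposition in both directions, so I expect the write-up to be short.
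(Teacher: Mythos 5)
Your proposal is correct and follows essentially the same route as the paper, which presents Theorem~\ref{290519a} as a direct consequence of the preceding Proposition (parts (a) and (c)) combined with the observation from Section~\ref{sec1} that every transitivity module $V(H,G_1)$ is realized as the transitivity module of $\aut{\cay(H,\bsets{V(H,G_1)})}$, since automorphisms of the colored graph fixing $1_H$ preserve each color class and hence have exactly the basic sets as orbits. Your explicit treatment of that orbit-identification is precisely the point the paper leaves implicit, so no changes are needed.
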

Thus one has to check all transitivity modules over the group $H$. To reduce the number of checks we use the following partial order on the set $\Sup{\hat{H}}$ consisting of all overgroups of $\hat{H}$.

Given two overgroups $X,Y\in\Sup(\hat{H})$, we write $X\preceq_{\hat{H}} Y$ if any $H$-regular subgroup of $Y$ may be conjugated into $X$ by an element of $Y$, i.e.
$$
\forall_{g\in\sym{H}}:\ \hat{H}^g\leq Y\implies \exists y\in Y:
(\hat{H}^g)^y\leq X.
$$
One can easily check that $\preceq_{\hat{H}}$ is a partial order on the set of all overgroups of $\hat{H}$.

The statement below allows us to consider transitivity modules of $\prec_{\hat{H}}$-minimal groups only.
\begin{prop}\label{110619a} Let $G_1\leq G_2$ be two overgroups of 
$\hat{H}$ and $\mathfrak{A}_i:=V(G_i,H)$ their transitivity modules.
Then $\mathfrak{A}_1\supseteq\mathfrak{A}_2$. If $G_1\preceq_{\hat{H}}\aut{\mathfrak{A}_2}$ and $\mathfrak{A}_1$ is CI, then $\mathfrak{A}_2$ is also a CI-S-ring.
\end{prop}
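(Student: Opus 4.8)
The plan is to verify the two assertions in turn, the first being a routine consequence of the definition of a transitivity module and the second being the substantive claim that uses the partial order $\preceq_{\hat{H}}$ together with the characterisation of CI-S-rings recorded just above.

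First I would prove the containment $\mathfrak{A}_1\supseteq\mathfrak{A}_2$. Since $\hat{H}\leq G_1\leq G_2\leq\sym{H}$, the point stabiliser $(G_1)_1$ is a subgroup of $(G_2)_1$, hence every orbit of $(G_2)_1$ on $H$ is a union of orbits of $(G_1)_1$. Thus the Schur partition $\bsets{\mathfrak{A}_1}$ refines $\bsets{\mathfrak{A}_2}$, so every basic quantity of $\mathfrak{A}_2$ is a sum of basic quantities of $\mathfrak{A}_1$, which gives $\mathfrak{A}_2\subseteq\mathfrak{A}_1$ as subalgebras of $\mathbb{Q}[H]$.

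For the second statement, suppose $G_1\preceq_{\hat{H}}\aut{\mathfrak{A}_2}$ and $\mathfrak{A}_1$ is a CI-S-ring; I must show $\mathfrak{A}_2$ is CI, which by the remark after the definition amounts to $\iso_1(\mathfrak{A}_2,*)=\aut{\mathfrak{A}_2}\aut{H}$. One inclusion is trivial. For the other, take $f\in\iso_1(\mathfrak{A}_2,\mathfrak{B})$ for some S-ring $\mathfrak{B}\subseteq\mathbb{Q}[H]$; I want to produce $\varphi\in\aut{H}$ with $f(S)=\varphi(S)$ for all $S\in\bsets{\mathfrak{A}_2}$. The idea is: a combinatorial isomorphism of the association scheme of $\mathfrak{A}_2$ conjugates the automorphism group $\aut{\mathfrak{A}_2}=G_2$ to $\aut{\mathfrak{B}}$; in particular $\hat{H}^{f}\leq\aut{\mathfrak{B}}=f^{-1}G_2 f$, so $\hat{H}^{f^{-1}}$... more precisely $f\hat{H}f^{-1}\leq G_2$. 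Since $G_1\preceq_{\hat{H}}G_2$ (note $\aut{\mathfrak{A}_2}=G_2$ because $G_2$ is $2$-closed as an automorphism group, or at worst replace $G_2$ by $\aut{\mathfrak{A}_2}\supseteq G_2$ and observe the hypothesis is still the one we need), there is $y\in G_2$ with $(f\hat Hf^{-1})^{y}\leq G_1$. Then $g:=yf$ satisfies $g\hat H g^{-1}=\hat H^{g^{-1}}$... I would track signs carefully here, but the upshot is that $g\in\iso(\mathfrak{A}_1,*)$: indeed $g$ sends the basic graphs of $\mathfrak{A}_1$ to Cayley graphs over $H$ because $g$ normalises-into a conjugate of $\hat H$, so the image scheme is again a Cayley scheme over $H$, i.e. an S-ring $\mathfrak{B}'\subseteq\mathbb{Q}[H]$. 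After left-multiplying by a suitable element of $\hat H$ we may assume $g$ is normalized, so $g\in\iso_1(\mathfrak{A}_1,\mathfrak{B}')$.

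Now apply the hypothesis that $\mathfrak{A}_1$ is CI: there is $\varphi\in\aut{H}$ with $g(S)=\varphi(S)$ for every $S\in\bsets{\mathfrak{A}_1}$. Because $\bsets{\mathfrak{A}_1}$ refines $\bsets{\mathfrak{A}_2}$ (from the first part), it follows that $g(T)=\varphi(T)$ for every $T\in\bsets{\mathfrak{A}_2}$ as well, since each such $T$ is a union of sets in $\bsets{\mathfrak{A}_1}$. Finally $g$ and $f$ differ by the element $y\in G_2=\aut{\mathfrak{A}_2}$ (and a bookkeeping element of $\hat H$ absorbed into normalization), and $\aut{\mathfrak{A}_2}$ fixes every $T\in\bsets{\mathfrak{A}_2}$ setwise by definition of the automorphism group of an S-ring; hence $f(T)=\varphi'(T)$ for all $T\in\bsets{\mathfrak{A}_2}$ with $\varphi'\in\aut{\mathfrak{A}_2}\aut{H}$, which is exactly the CI-property for $\mathfrak{A}_2$. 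The main obstacle is the careful bookkeeping in the middle paragraph: keeping straight which side the conjugation acts on, checking that the transported scheme is genuinely an S-ring over $H$ (rather than over some other copy of $H$), and reducing to normalized isomorphisms without losing the relation to $\varphi$. Everything else is the two soft facts: refinement of Schur partitions and the characterisation $\iso_1(\mathfrak{A},*)=\aut{\mathfrak{A}}\aut{H}$ of CI-S-rings.
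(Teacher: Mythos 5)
Your overall plan is the standard reduction (the paper states this proposition without proof), and most of it is sound: from $f\in\iso_1(\mathfrak{A}_2,\mathfrak{B})$ one gets an $H$-regular subgroup $\hat{H}^{f}\leq\aut{\mathfrak{A}_2}$; the hypothesis $G_1\preceq_{\hat{H}}\aut{\mathfrak{A}_2}$ (note it is about $\aut{\mathfrak{A}_2}$, so your aside that $\aut{\mathfrak{A}_2}=G_2$ by $2$-closedness is both unjustified and unnecessary) supplies $y\in\aut{\mathfrak{A}_2}$ with $\hat{H}^{fy}\leq G_1\leq\aut{\mathfrak{A}_1}$; and then $g:=f\circ y$ (the order matters: $y$ must act before $f$, otherwise you control neither the conjugate of $\hat{H}$ nor the action on the basic graphs of $\mathfrak{A}_2$), normalized by a right translation, lies in $\iso_1(\mathfrak{A}_1,*)$, so CI-ness of $\mathfrak{A}_1$ gives $\varphi\in\aut{H}$ with $g'(S)=\varphi(S)$ on $\bsets{\mathfrak{A}_1}$, hence on $\bsets{\mathfrak{A}_2}$ by refinement.

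The genuine gap is the final transfer from $g$ back to $f$. It rests on the claim that $\aut{\mathfrak{A}_2}$ fixes every $T\in\bsets{\mathfrak{A}_2}$ setwise "by definition"; this is false: elements of $\aut{\mathfrak{A}_2}$ fix each basic graph $\cay(H,T)$ as a set of arcs, not the subset $T\subseteq H$ (already $\hat{H}\leq\aut{\mathfrak{A}_2}$ moves the basic sets), and only the normalized automorphisms fix the $T$'s. Moreover the conclusion you draw, that $f(T)=\varphi'(T)$ for some $\varphi'\in\aut{\mathfrak{A}_2}\aut{H}$, is not the CI-property, which requires a single $\varphi\in\aut{H}$ with $f(T)=\varphi(T)$ for all basic $T$. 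The correct transfer works at the level of graphs: since $y$ stabilizes each $\cay(H,T)$ with $T\in\bsets{\mathfrak{A}_2}$, and right translations stabilize every Cayley graph, one has $\cay(H,T)^{g'}=\cay(H,T)^{f}=\cay(H,f(T))$; on the other hand, because $T$ is a union of basic sets of $\mathfrak{A}_1$ and $g'$ is normalized, $\cay(H,T)^{g'}=\cay(H,g'(T))=\cay(H,\varphi(T))$. Comparing connection sets gives $f(T)=\varphi(T)$ with $\varphi\in\aut{H}$, which is exactly the CI-property for $\mathfrak{A}_2$. With this step (and the composition order) repaired, your argument goes through.
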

Sylow's theorem shows that if $H$ is a $p$-group, then any $\preceq_{\hat{H}}$-minimal overgoup of $\hat{H}$ is a $p$-group. In this case we are left to investigate transitivity modules
whose basic sets have a $p$-power cardinality. These Schur rings are called \textit{$p$-Schur rings}.

\subsection{Structural properties of Schur rings}
As before, $H$ is a finite group and $\mathbb{Q}[H]$ is its group algebra. 
For an element of the group algebra $T=\sum_{g\in H} a_g g$ let $T^{(m)}=\sum_{g\in H} a_g g^m$. Two Lemmas below are taken from \cite{wielandt}.
\begin{lem}\label{lemgcd}
Let $\mathfrak{A}$ be an S-ring over an abelian group $H$. If $gcd(m,|H|)=1$, then $T^{(m)} \in \mathfrak{A}$ for every $T \in \mathfrak{A}$.
\end{lem}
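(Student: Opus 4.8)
The plan is to reduce the statement to a single prime exponent $m=p$ with $p\nmid|H|$ and then to run the classical Frobenius argument for group algebras of abelian groups. Since $\gcd(m,|H|)=1$ we may replace $m$ by a positive integer congruent to it modulo the exponent of $H$ -- this changes neither $g^m$ for $g\in H$ nor the hypothesis $\gcd(m,|H|)=1$ -- so assume $m\ge 1$. The operator $x\mapsto x^{(m)}$ on $\mathbb Q[H]$ is $\mathbb Q$-linear, and $(x^{(a)})^{(b)}=x^{(ab)}$; hence, factoring $m=p_1\cdots p_s$ into primes (each coprime to $|H|$ because $m$ is), it suffices to prove the claim for $m=p$ prime, the general $m$ following by iteration. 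Moreover, since $(\,\cdot\,)^{(p)}$ is linear and $\mathfrak A$ is spanned by its basic quantities, it is enough to show $\underline{T_i}^{(p)}\in\mathfrak A$ for every basic set $T_i$.

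For the prime case I would work in the integral group ring $\mathbb Z[H]$ and expand the $p$-th power $\underline{T_i}^{\,p}$ by the multinomial theorem. As $H$ is abelian, a term depends only on the multiset of its factors: the constant multisets contribute $\sum_{t\in T_i}t^p=\underline{T_i}^{(p)}$, while every non-constant multiset carries a coefficient $\binom{p}{k_1,\dots,k_r}$ with all parts $k_j<p$, hence divisible by $p$. Therefore $\underline{T_i}^{\,p}=\underline{T_i}^{(p)}+p\,r$ for some $r\in\mathbb Z[H]$, and since $\mathfrak A$ is a subalgebra we get $\underline{T_i}^{\,p}\in\mathfrak A$, so
$$\underline{T_i}^{(p)}\in\mathfrak A+p\,\mathbb Z[H].$$

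The remaining, and essentially only, difficulty is to promote this congruence modulo $p$ to genuine membership in $\mathfrak A$. Here the hypothesis $p\nmid|H|$ enters: then $x\mapsto x^p$ is a bijection of $H$, so $\underline{T_i}^{(p)}$ is again a simple quantity $\underline S$ with $S=\{t^p:t\in T_i\}$, i.e. a $0$--$1$ vector. Since the basic quantities of $\mathfrak A$ have pairwise disjoint supports, $\mathfrak A\cap\mathbb Z[H]$ is exactly their $\mathbb Z$-span; writing $\underline S=\sum_j a_j\underline{T_j}+p\,r$ with $a_j\in\mathbb Z$ and $r=\sum_g r_g g$, the coefficient of any $g\in T_j$ equals $a_j+p\,r_g$ and lies in $\{0,1\}$, so for $g,g'\in T_j$ we get $p(r_g-r_{g'})\in\{-1,0,1\}$, forcing $r_g=r_{g'}$. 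Thus the coefficient is constant on each $T_j$, so every basic set is contained in $S$ or disjoint from it; hence $S$ is a union of basic sets and $\underline{T_i}^{(p)}=\underline S\in\mathfrak A$, as desired. (One can package the last step as the observation that $(\mathfrak A\cap\mathbb Z[H])\otimes\mathbb F_p$ is a subalgebra of the commutative $\mathbb F_p$-algebra $\mathbb F_p[H]$, hence Frobenius-stable, and then lift using the disjoint supports.) All the real content sits in this last paragraph; the preceding reductions are routine.
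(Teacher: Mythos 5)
Your proof is correct. Note that the paper does not prove this lemma at all: it is quoted from Wielandt's book (his Theorem 23.9(a), Schur's classical result), so there is no in-paper argument to compare against line by line. What you give is essentially the standard Frobenius argument behind that citation: reduce to a prime $m=p$ coprime to $|H|$, use the multinomial expansion in $\mathbb{Z}[H]$ to get $\underline{T}^{\,p}\equiv \underline{T}^{(p)} \pmod{p\,\mathbb{Z}[H]}$, and then upgrade the congruence to membership in $\mathfrak{A}$. Your final step is the only delicate one and you handle it correctly: because $p\nmid|H|$ makes $t\mapsto t^p$ injective, $\underline{T}^{(p)}$ is a $0$--$1$ vector, and constancy of coefficients on basic sets follows from the disjoint-support description of $\mathfrak{A}\cap\mathbb{Z}[H]$; this is in effect a mod-$p$ version of the Schur--Wielandt principle, the same device the paper itself uses (for the prime $q$ dividing $|H|$, via Lemma~\ref{lempower,congruence}) in the proofs of Propositions~\ref{250115a} and~\ref{220519b}. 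All the reductions (adjusting $m$ modulo the exponent, factoring into primes and iterating, passing to basic quantities by linearity) are sound, since the primes dividing $|H|$ coincide with those dividing the exponent and the image of an $\mathfrak{A}$-set under $t\mapsto t^{p}$ is again an $\mathfrak{A}$-set, so the iteration goes through.
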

A similar statement holds of $m$ divides $|H|$.
\begin{lem}\label{lempower,congruence}
Let $\underline{T}$ be a simple quantity and $m$ a prime divisor of $|G|$ and let $\underline{T}^{(m)} = \sum_{g \in G}a_g g$. Then $\sum_{a_g \not\equiv 0 \pmod{m}} g \in \mathfrak{A}$.
\end{lem}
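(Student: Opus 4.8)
The statement to prove is Lemma~\ref{lempower,congruence}: for a simple quantity $\underline{T}\in\mathfrak{A}$ and a prime $m$ dividing $|H|$, writing $\underline{T}^{(m)}=\sum_{g\in H}a_g g$, the simple quantity $\sum_{a_g\not\equiv 0\!\!\pmod m} g$ lies in $\mathfrak{A}$.

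The plan is to work modulo $m$, exploiting the fact that raising to the $m$-th power is close to a ring homomorphism in characteristic $m$. First I would consider the group algebra $\mathbb{F}_m[H]$ over the field with $m$ elements and the reduction map $\mathbb{Z}[H]\to\mathbb{F}_m[H]$. For any $x=\sum_g b_g g\in\mathbb{Z}[H]$, the Frobenius-type identity $(\sum_g b_g g)^m\equiv \sum_g b_g^m g^m\equiv\sum_g b_g g^m\pmod m$ holds, using the freshman's dream $(\alpha+\beta)^m\equiv\alpha^m+\beta^m$ valid in any commutative ring of characteristic $m$ together with Fermat's little theorem $b^m\equiv b\pmod m$ for integers. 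Applying this to $x=\underline{T}$ (a $0/1$ vector) gives $\underline{T}^m\equiv\underline{T}^{(m)}\pmod m$ as elements of $\mathbb{Z}[H]$; since $\mathfrak{A}$ is closed under multiplication, $\underline{T}^m\in\mathfrak{A}$, so the mod-$m$ reduction of $\underline{T}^{(m)}$ is the reduction of an element of $\mathfrak{A}$.

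The next step is to pass from "an element of $\mathfrak{A}$ reduced mod $m$" to "the support of that reduction is a simple quantity in $\mathfrak{A}$". This is the heart of the argument. The key point is that $\mathfrak{A}$, being spanned over $\mathbb{Q}$ by the pairwise-disjoint basic quantities $\underline{T_0},\dots,\underline{T_l}$, has the property that for any $y=\sum_i c_i\underline{T_i}\in\mathfrak{A}$ with integer coefficients $c_i$, the "level sets" of the coefficient function are unions of basic sets; in particular, for any integer $k$, the simple quantity $\sum_{i:\,c_i\equiv k\,(m)}\underline{T_i}$ is again in $\mathfrak{A}$. Concretely: take $y:=\underline{T}^m\in\mathfrak{A}$; expand $y=\sum_i c_i\underline{T_i}$ (the coefficient of $g$ in $y$ depends only on which basic set contains $g$, because $y\in\mathfrak{A}$); then $\sum_{g:\,a_g\not\equiv0\,(m)}g=\sum_{i:\,c_i\not\equiv0\,(m)}\underline{T_i}$, and this element lies in $\mathfrak{A}$. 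To justify that the selection $\sum_{i:\,c_i\not\equiv 0\,(m)}\underline{T_i}$ belongs to $\mathfrak{A}$ one can argue directly: the map sending $\sum_i c_i\underline{T_i}$ to $\sum_i \bar c_i\underline{T_i}$, where $\bar c_i$ is the residue of $c_i$ mod $m$ lifted to $\{0,\dots,m-1\}$, can be realized inside $\mathfrak{A}$ by taking an appropriate $\mathbb{Z}$-linear combination of powers and products, or more cleanly by noting that the indicator of $\{i: c_i\equiv k\}$ is a polynomial in the single element $y$ with integer coefficients after reducing mod $m$ — but the slickest route is simply to observe that $\{\underline{T_i}\}$ is a basis, so any function constant on basic sets yields an element of $\mathfrak{A}$, and $g\mapsto[a_g\not\equiv 0]$ is such a function once we know $a_g\bmod m$ depends only on the basic set of $g$, which follows from $\underline{T}^{(m)}\equiv\underline{T}^m\pmod m$.

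The main obstacle, and the place to be careful, is precisely this last transfer: one must be sure that the coefficient $a_g$ of $\underline{T}^{(m)}$ is, \emph{modulo $m$}, a class function on the basic sets of $\mathfrak{A}$ — this is what the congruence $\underline{T}^m\equiv\underline{T}^{(m)}$ buys us, since $\underline{T}^m$ genuinely lies in $\mathfrak{A}$ and hence has coefficients constant on each $T_i$. Note that over $\mathbb{Q}$ the quantity $\underline{T}^{(m)}$ itself need \emph{not} lie in $\mathfrak{A}$ (that is exactly why Lemma~\ref{lemgcd} required $\gcd(m,|H|)=1$); only its mod-$m$ reduction is controlled, which is why the statement speaks of the support of the coefficients that are nonzero mod $m$ rather than of $\underline{T}^{(m)}$ directly. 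Once the class-function observation is in place, extracting the simple quantity $\sum_{a_g\not\equiv0\,(m)}g$ from $\mathfrak{A}$ is immediate from the defining basis property of a Schur ring.
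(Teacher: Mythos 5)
Your argument is correct. Note that the paper gives no proof of this lemma at all: it is quoted directly from Wielandt's book, and what you wrote is essentially the classical argument found there — reduce modulo $m$, use the Frobenius-type congruence $\underline{T}^m\equiv\underline{T}^{(m)}\pmod{m}$ (freshman's dream for the prime $m$ together with Fermat's little theorem), observe that $\underline{T}^m\in\mathfrak{A}$ has coefficients constant on the basic sets, hence $a_g\bmod m$ is a class function on $\Bsets{\mathfrak{A}}$, and conclude by the Schur--Wielandt principle that the support $\{g\,:\,a_g\not\equiv 0 \pmod m\}$ is a union of basic sets and so its simple quantity lies in $\mathfrak{A}$. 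The middle of your write-up wanders a bit (the ``polynomial in $y$'' detour is unnecessary), but you end on the right justification: any union of basic sets gives an element of $\mathfrak{A}$ because the basic quantities span it. The one hypothesis you should state explicitly is commutativity: the identity $\bigl(\sum_g b_g g\bigr)^m\equiv\sum_g b_g g^m\pmod{m}$ requires $\mathbb{F}_m[H]$ to be commutative, i.e. $H$ abelian, which is indeed the setting of the paper (as in Lemma~\ref{lemgcd}); also, primality of $m$ is what is used, not that $m$ divides $|H|$.
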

A Schur ring $\mathfrak{A}$ is called \textit{imprimitive} if for some non-trivial subgroup $L \le H$, the basic set $\underline{L}$ is an element of $\mathfrak{A}$. Such a subgroup is called an \textit{$\mathfrak{A}$-subgroup}. If $T$ is an $\mathfrak{A}$-set, then we may define its \textit{radical} $\rad(T)=\{g \in T \mid ~ Tg=T\}$.
It is well known that the radical of an $\mathfrak{A}$-set $T$ is an $\mathfrak{A}$-subgroup \cite{wielandt}.

We say that $\mathfrak{A}$ is \textit{primitive} if the only $\mathfrak{A}$-subgroups are $1$ and $H$.
For an $\mathfrak{A}$-subgroup $U$ one can define $\mathfrak{A}_U$ as the restriction of $\mathfrak{A}$ to $U$ spanned by $\mathfrak{A}$-sets contained in $U$. For a pair of $\mathfrak{A}$-subgroups $L \mathrel{\unlhd} U$
we define $\mathfrak{A}_{U/L}$ as a subring of $\Z[U/L]$ spanned by $\{ \underline X^{\pi} ~\mid~ X \subset U, ~x \in \bsets{\mathfrak{A}}\}$, where $\pi$ denotes the canonical epimorphism from $U$ to $U/L$.

We say that the Schur ring $\mathfrak{A}$ is a \textit{generalized wreath product} if there exists $\mathfrak{A}$-subgroups $L\leq U$  such that $L$ is a normal subgroup in $H$ and every basic set outside of $U$ is the union of $L$-cosets.
Such a wreath product is called \textit{trivial} if $L=\{e\}$ or $U=H$.

Let $K$ and $L$ be two $\mathfrak{A}$-subgroups. We say that $\mathfrak{A}$ is the \textit{star product} of $\mathfrak{A}_K$ and $\mathfrak{A}_L$ (or $\mathcal{A}$ \textit{admits a star decomposition}) if the following conditions hold:
\begin{enumerate}
\item $K \cap L \unlhd L$
\item each basic set $T$ of $\mathfrak{A}$ with $T \subseteq (L\setminus K)$ is the union of $K \cap L$-cosets
\item\label{itemstarc}
for each basic set $T \subseteq H\setminus (K \cup L)$ there exists
$R, S \in \bsets{\mathfrak{A}}$, where $R \subseteq K$, $S \subseteq L$ such that $T = RS$.
\end{enumerate}
Note that it is enough to verify for \ref{itemstarc} that $R$ and $S$ are $\mathfrak{A}$-sets. 

In this case we write $\mathfrak{A} =\mathfrak{A}_K \star \mathfrak{A}_L$. A star-decomposition is called \textit{trivial} if $K = 1 \mbox{ or } H$.

The theorems below provide us sufficient conditions for these product to have the CI-property. Although both of the statements were originally proved for elementary abelian groups, their proofs work for a more general class of groups, namely: the abelian groups with elementary abelian Sylow subgroups. In what follows we refer to these groups as $\mathcal{E}$-groups. 

\begin{Thm}[\cite{HM}]\label{thmstar}
Let $H$ be an $\mathcal{E}$-group and let $G \le Sym(H)$ be an overgroup of $\hat{H}$. If $V(H,G_1)$ admits 
a nontrivial star-decomposition with CI-factors, then $V(H,G_1)$ is a CI-S-ring.
\end{Thm}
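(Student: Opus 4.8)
The plan is to reduce the CI-property for $V(H,G_1)$, with $H$ an $\cala{E}$-group admitting a nontrivial star-decomposition $\mathfrak{A}=\mathfrak{A}_K\star\mathfrak{A}_L$, to the CI-property of the two factors $\mathfrak{A}_K$ and $\mathfrak{A}_L$ by analysing normalized isomorphisms. By the remark after Theorem~\ref{290519a}, it suffices to show $\iso_1(\mathfrak{A},*)=\aut{\mathfrak{A}}\aut{H}$; equivalently, given any S-ring $\mathfrak{B}\subseteq\mathbb{Q}[H]$ and any $f\in\iso_1(\mathfrak{A},\mathfrak{B})$, I must produce $\varphi\in\aut{H}$ with $f(S)=\varphi(S)$ for all $S\in\bsets{\mathfrak{A}}$. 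The first step is to show that a combinatorial isomorphism transports the whole star-decomposition structure to $\mathfrak{B}$: the images $f(K)$, $f(L)$ (after composing with a translation so that $f$ fixes the identity) are $\mathfrak{B}$-subgroups, $\mathfrak{B}=\mathfrak{B}_{f(K)}\star\mathfrak{B}_{f(L)}$, and since $K,L,K\cap L, KL$ are characteristic $\mathfrak{A}$-subgroups one checks (using Lemma~\ref{lemgcd}, Lemma~\ref{lempower,congruence} and the radical construction) that $f(K),f(L)$ are determined as subgroups of $H$ of the appropriate orders; because $H$ is an $\cala{E}$-group, any two subgroups of the same order that arise this way differ by a group automorphism, so after composing with a suitable $\psi\in\aut{H}$ we may assume $f(K)=K$ and $f(L)=L$.

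Next I would restrict. The map $f$ induces combinatorial isomorphisms $f_K:\mathfrak{A}_K\to\mathfrak{B}_K$ and $f_L:\mathfrak{A}_L\to\mathfrak{B}_L$ on the restrictions, and also a combinatorial isomorphism $f_{K\cap L}$ on the bottom and $\bar f$ on the quotients $KL/(K\cap L)$ etc., all normalized. Here $\mathfrak{B}_K,\mathfrak{B}_L$ are S-rings over $K$ and $L$. Invoking the hypothesis that the factors $\mathfrak{A}_K$ and $\mathfrak{A}_L$ are CI-S-rings (over the $\cala{E}$-groups $K$ and $L$, which are again $\cala{E}$-groups), there are $\varphi_K\in\aut{K}$ and $\varphi_L\in\aut{L}$ realising $f_K$ and $f_L$ on basic sets. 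The crucial compatibility issue is that $\varphi_K$ and $\varphi_L$ need not agree on $K\cap L$; I would correct this by composing $\varphi_L$ with an element of $\aut{L}$ that is trivial on the complement and adjusts the action on $K\cap L$, using that in an abelian $\cala{E}$-group $K\cap L$ has a complement in $L$ and every automorphism of $K\cap L$ extends. This lets me glue $\varphi_K$ and the corrected $\varphi_L$ into an automorphism $\varphi_0$ of the $\mathfrak{A}$-subgroup $KL$.

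Then I would extend $\varphi_0$ from $KL$ to all of $H$. Since $H$ is abelian with elementary abelian Sylow subgroups, $KL$ has a complement, or at least $\varphi_0$ (being compatible with the decomposition) extends to $\varphi\in\aut{H}$; the extension is chosen so that on the quotient structure it matches $\bar f$. Finally I verify $f(T)=\varphi(T)$ for every basic set $T\in\bsets{\mathfrak{A}}$ by the three cases of the star-decomposition definition: for $T\subseteq K$ and $T\subseteq L\setminus K$ this is immediate from $f_K=\varphi_K$, $f_L=\varphi_L$ on basic sets and the $(K\cap L)$-coset condition; for a basic set $T\subseteq H\setminus(K\cup L)$ we write $T=RS$ with $R\subseteq K$, $S\subseteq L$ basic, and then $f(T)=f(R)f(S)=\varphi(R)\varphi(S)=\varphi(RS)=\varphi(T)$ using that $f$ and $\varphi$ are both multiplicative on this product and already agree on $R$ and $S$. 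Composing back the translation and $\psi$, $\psi^{-1}$ and the various corrections gives the desired Cayley isomorphism, proving $\mathfrak{A}$ is a CI-S-ring.

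The main obstacle I expect is the gluing step: reconciling $\varphi_K$ and $\varphi_L$ on the overlap $K\cap L$ and simultaneously keeping the resulting automorphism of $KL$ compatible with the coset conditions and with the induced map on the quotient $H/(K\cap L)$. The $\cala{E}$-group hypothesis is exactly what makes this feasible, since it guarantees enough splitting (complements for $\mathfrak{A}$-subgroups) and extendability of automorphisms; the bookkeeping of which basic sets are forced, and verifying that the corrected $\varphi_L$ still realises $f_L$, is where the real work lies, whereas the case analysis on basic sets at the end is routine once the automorphism is in hand.
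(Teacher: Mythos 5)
First, note that the paper itself does not prove this statement: Theorem~\ref{thmstar} is imported from \cite{HM} with only the remark that the original proof for elementary abelian groups carries over to $\mathcal{E}$-groups, so your attempt can only be judged on its own merits. Your reduction is the natural one and can indeed be completed along the lines you describe: normalize $f$, use that $f(K),f(L)$ are $\mathfrak{B}$-subgroups with $|f(K)\cap f(L)|=|K\cap L|$ to compose with some $\psi\in\aut{H}$ so that $f(K)=K$, $f(L)=L$, apply the CI-hypothesis to the restrictions, glue, and check the three types of basic sets. (Two small points: condition (c) of the star decomposition forces $H=KL$, so no extension step from $KL$ to $H$ is needed; and $K,L$ need not be ``characteristic'' --- only the matching of orders and of the intersection order is used.)

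The genuine gap is that your two load-bearing steps are asserted rather than proved, and the one fact that makes both work never appears in your plan: a normalized combinatorial isomorphism preserves the structure constants, i.e.\ $c_{f(X)f(Y)}^{f(Z)}=c_{XY}^{Z}$ for basic sets, so $\und{X}\mapsto\und{f(X)}$ extends to an algebraic isomorphism $\mathfrak{A}\to\mathfrak{B}$ (Lemmas~\ref{lemgcd} and \ref{lempower,congruence}, which you cite instead, are about one fixed S-ring and do not give this). Concretely: (i) for a basic set $T=RS$ outside $K\cup L$ one has $\und{R}\,\und{S}=c\,\und{T}$ because the support $RS$ lies in the single basic set $T$; transporting this identity to $\mathfrak{B}$ gives $f(R)f(S)=f(T)$, which is exactly the ``multiplicativity of $f$'' you assume without justification. (ii) For the gluing, the correcting automorphism $\theta\in\aut{L}$ (equal to $\varphi_K\varphi_L^{-1}$ on $K\cap L$ and trivial on a complement, which exists since $L$ is an $\mathcal{E}$-group) satisfies $\theta(x)x^{-1}\in K\cap L$ for all $x$, hence fixes setwise every union of $(K\cap L)$-cosets; to conclude that $\theta\varphi_L$ still realizes $f$ on a basic set $S\subseteq L\setminus K$ you must know that $f(S)$ is again a union of $(K\cap L)$-cosets, which follows by transporting $\und{S}\,\und{K\cap L}=|K\cap L|\,\und{S}$ (equivalently, $K\cap L\le\rad(S)$ is an invariant of the algebraic isomorphism) together with $f(K\cap L)=K\cap L$. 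You explicitly flag this verification as ``where the real work lies'' but leave it open; with the structure-constant preservation in hand it is a short argument, and without it the proof does not close.
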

In the case of generalized wreath product we have the following result.

\begin{Thm}[\cite{KR}]\label{kovacsryabov}
Let $H$ be an $\mathcal{E}$-group and let $G \le Sym(H)$ be an overgroup of $\hat{H}$. Assume that $\mathfrak{A}:=V(G,H_1)$ 
is a non-trivial generalized wreath product with respect
to $\mathfrak{A}$-subgroups $\{e\} \ne L\le U \ne H$. Assume that $\mathfrak{A}_U$ and $\mathfrak{A}_{H/L}$ are CI-$S$-rings and $Aut_{U/L}(\mathfrak{A}_{U/L})= Aut_{U}(\mathfrak{A}_{U})^{U/L}Aut_{H/L}(\mathfrak{A}_{H/L})^{U/L} $. Then $\mathfrak{A}$ is a CI-S-ring.
\end{Thm}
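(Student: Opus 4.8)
The statement to be proved is Theorem~\ref{kovacsryabov}, the generalized-wreath-product criterion. My plan is to reduce the CI-property for $\mathfrak{A}=V(G,H)$ to the CI-property of its section S-rings $\mathfrak{A}_U$ and $\mathfrak{A}_{H/L}$ by analysing an arbitrary normalized isomorphism. Concretely, by the characterisation recalled in the excerpt, it suffices to show $\iso_1(\mathfrak{A},*)=\aut{\mathfrak{A}}\aut{H}$; so I fix an S-ring $\mathfrak{B}\subseteq\mathbb{Q}[H]$ and a normalized isomorphism $f\in\iso_1(\mathfrak{A},\mathfrak{B})$, and I want to produce $\varphi\in\aut{H}$ with $f(S)=\varphi(S)$ for every $S\in\bsets{\mathfrak{A}}$. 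The first step is to transport structure across $f$: since $f$ is a combinatorial isomorphism of association schemes, $\mathfrak{B}$ is again a generalized wreath product, with respect to the $\mathfrak{B}$-subgroups $L':=f(L)$ and $U':=f(U)$ (here I use that $f$ maps $\mathfrak{A}$-subgroups to $\mathfrak{B}$-subgroups because $f(1_H)=1_H$ and $f$ preserves the lattice of closed subsets that happen to be groups — a standard fact, and the $L$-coset condition is preserved verbatim). Moreover $f$ induces normalized isomorphisms $f_U\colon\mathfrak{A}_U\to\mathfrak{B}_{U'}$ and $f_{H/L}\colon\mathfrak{A}_{H/L}\to\mathfrak{B}_{H/L'}$ on the sections.

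The second step uses the CI-hypotheses on the sections. Since $\mathfrak{A}_U$ is a CI-S-ring, there is $\psi\in\aut{U}$ with $f_U(R)=\psi(R)$ for all $R\in\bsets{\mathfrak{A}_U}$; in particular $\psi(U)=U'$, which forces $U'$ to be an $\aut{H}$-image of $U$, and after composing $f$ with a suitable element of $\aut{H}$ we may assume $U'=U$ and $f_U\in\aut{U}$. Similarly, using that $\mathfrak{A}_{H/L}$ is CI together with the fact that $L$ is normal in $H$ (so $H/L$ is a group), we may further normalise so that $L'=L$ and $f_{H/L}\in\aut{H/L}$. At this point $f$ restricts to a group automorphism on $U$ and descends to a group automorphism on $H/L$, and the two agree on the common section $U/L$ after a final adjustment — this is exactly where the compatibility hypothesis $\aut{U/L}(\mathfrak{A}_{U/L})=\aut{U}(\mathfrak{A}_U)^{U/L}\aut{H/L}(\mathfrak{A}_{H/L})^{U/L}$ is needed: it lets us modify $f_U$ by an element of $\aut{U}$ that is trivial modulo $L$ and $f_{H/L}$ by an element of $\aut{H/L}$ lifting to $\aut{H}$, so that the induced maps on $U/L$ coincide. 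The third step is then to glue: given a group automorphism $\alpha$ of $U$ and a group automorphism $\beta$ of $H/L$ inducing the same automorphism of $U/L$, and using that $H$ is an $\mathcal{E}$-group (so the relevant extension $1\to U\to H\to H/U\to 1$ and the cosets of $L$ behave as in the elementary-abelian case — every Sylow subgroup splits off), one builds $\varphi\in\aut{H}$ agreeing with $\alpha$ on $U$ and inducing $\beta$ on $H/L$. Finally one checks $\varphi$ and $f$ agree on every basic set of $\mathfrak{A}$: a basic set inside $U$ is handled by $\alpha=f_U$; a basic set $T$ outside $U$ is a union of $L$-cosets by the generalized-wreath condition, so $T$ is determined by its image in $H/L$, where $\varphi$ and $f$ agree via $\beta=f_{H/L}$, together with agreement on a single coset representative which can be pushed into $U$ after translating by $\hat{H}$.

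I expect the main obstacle to be the gluing step together with the careful bookkeeping of which automorphisms may be absorbed into $\hat{H}$ versus $\aut{H}$. The $\mathcal{E}$-group hypothesis is doing real work there: one needs that the short exact sequences $L\hookrightarrow U$, $U\hookrightarrow H$ and $L\hookrightarrow H$ split compatibly so that an automorphism prescribed on $U$ and on $H/L$ lifts to $H$, and this is automatic only because each Sylow subgroup is elementary abelian, hence a vector space over $\mathbb{F}_p$ where every subspace is a direct summand and every partial linear map extends. Verifying that the lifted $\varphi$ is simultaneously consistent with $f$ on the ``mixed'' basic sets — those meeting both $U$ and its complement is vacuous since such sets lie entirely outside $U$, but the ones outside $U$ still need the coset-representative argument — is the delicate point, and it is precisely here that the union-of-$L$-cosets property of the generalized wreath product is indispensable. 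The remaining verifications (that $f$ transports the wreath decomposition, that normalized isomorphisms respect sections, that the two section automorphisms can be made to agree on $U/L$ using the stated $\aut{}$-equality) are routine once the framework is set up.
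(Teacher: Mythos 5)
This statement is not proved in the paper at all: Theorem~\ref{kovacsryabov} is imported verbatim from \cite{KR}, so there is no in-paper argument to compare yours against. Judged on its own, your skeleton is the natural one and is completable: transport the generalized wreath structure along a normalized isomorphism $f$ (the structure constants carried by $f$ show the target S-ring is again a generalized wreath product), normalize $f(L)=L$, $f(U)=U$ by an element of $\aut{H}$ (possible since $H$ is an $\mathcal{E}$-group), invoke the CI hypotheses on the sections, reconcile on $U/L$ via the displayed equality of automorphism groups, glue prime-by-prime, and finish using that basic sets outside $U$ are unions of $L$-cosets.

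However, several of your intermediate claims are false as written, and they sit exactly at the crux. First, after no normalization does $f$ ``restrict to a group automorphism on $U$'' or ``descend to a group automorphism on $H/L$'': the CI hypotheses only give $\psi_U\in\aut{U}$ and $\psi_{H/L}\in\aut{H/L}$ agreeing with $f|_U$, resp.\ with the induced map $\bar f$, \emph{on basic sets}, and the whole reconciliation and gluing must be carried out with $\psi_U,\psi_{H/L}$, not with $f$ itself. Second, your description of how the hypothesis $\mathrm{Aut}_{U/L}(\fA_{U/L})=\mathrm{Aut}_{U}(\fA_{U})^{U/L}\,\mathrm{Aut}_{H/L}(\fA_{H/L})^{U/L}$ enters (``modify $f_U$ by an element of $\aut{U}$ that is trivial modulo $L$, and $f_{H/L}$ by an element lifting to $\aut{H}$'') is not what the hypothesis supplies and, read literally, fails. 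The correct mechanism is: $\sigma:=(\psi_{H/L}|_{U/L})^{-1}\circ\psi_U^{U/L}$ fixes every basic set of $\fA_{U/L}$ (both factors agree with $\bar f$ on them), hence $\sigma\in \mathrm{Aut}_{U/L}(\fA_{U/L})$; by the hypothesis (and since a group equal to a product of two subgroups also equals the reversed product) one writes $\sigma=\beta^{U/L}(\alpha^{U/L})^{-1}$ with $\alpha\in \mathrm{Aut}_U(\fA_U)$, $\beta\in \mathrm{Aut}_{H/L}(\fA_{H/L})$, and replaces $\psi_U$ by $\psi_U\circ\alpha$, $\psi_{H/L}$ by $\psi_{H/L}\circ\beta$ --- composition on the right is essential, because only then is agreement with $f$ on basic sets preserved (these $\alpha,\beta$ fix each basic set of their section; they are in general neither trivial modulo $L$ nor liftable to $H$). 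Third, the final check for a basic set $T$ disjoint from $U$ needs no ``agreement on a single coset representative'': both $T$ and $f(T)$ are unions of $L$-cosets, hence equal the full preimages of their images in $H/L$, so the glued $\varphi$ (with $\varphi^{H/L}=\psi_{H/L}$ agreeing with $\bar f$ on $\pi(T)\in\bsets{\fA_{H/L}}$) already satisfies $\varphi(T)=f(T)$; your extra clause signals a confusion at precisely the point where the union-of-$L$-cosets property does all the work. With these repairs your plan goes through.
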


\section{Schur rings over abelian group of non-powerful order}\label{sec2}

Recall that a number $n$ is call {\it powerful} if $p^2$ divides $n$ for every prime divisor $p$ of $n$. In this section and in what follows we assume that $H$ is an abelian group of non-powerful order, i.e. there exists a prime divisor $q$ of $|H|$ such that $|H| = n q$ where $n$ is coprime to $q$. In what follows we call such $q$ a {\it simple} prime divisor of $|H|$. We assue that $q > 2$.
 
Let $P$ and $Q$ denote the unique subgroups of $H$ of orders $n$ and $q$, respectively and let $Q^{\#}=Q \setminus\{1\}$.
Let $e$ be the exponent of $P$. The group $\Z_{e q}^*\cong\Z_{e}^*\times \Z_q^*$ acts on $H$ via raising to the power as $h\mapsto h^t$, where $t\in \Z_{e q}^*$. Denote $M_q:=\{t\in\Z_{eq}^*\,|\, t\equiv\, 1 \pmod{e} \}$. Clearly $M_q\cong\Z_q^*$.

Every element $h\in H$ has a unique decomposition into the product $h = h_{q'} h_q$ where $h_{q'}\in P$ and $h_q\in Q$. Notice that two elements $h,f\in H$ belong to the same $Q$-coset if and only if $h_{q'}=f_{q'}$. Let $q^*\in \Z_{e q}^*$ be an element satisfying $q^*q\equiv\, 1 \pmod{ e}$ and  $q^*\equiv\, 1\pmod{p}$. Then $h_p = h^{qq^*}$.

Given a subset $T\subseteq H$. We write $T_{q'}$ for the set $\{h_{q'}\,|\, h\in T\}$. Notice that $T_{q'}$ is always contained in $P$.
We always have the decomposition $T = \bigcup_{s\in T_p} s R_s$ where $R_s:=s^{-1}T \cap Q$.

In what follows $\mathfrak{A}$ stands for a non-trivial S-ring over $H$. Let $P_1$ is the maximal $\mathfrak{A}$-subgroup contained in $P$ while $Q_1$ is the minimal $\mathfrak{A}$-subgroup which contains $Q$.

We start with the following statement which is a direct consequence of 
Theorem 25.4 \cite{wielandt}.
\begin{prop}\label{W} Let $H$ be an abelian group. If $|H|$ has a simple prime divisor, then any primitive S-ring over $H$ is trivial. 
\end{prop}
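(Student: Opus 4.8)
The plan is to translate the statement into one about primitive permutation groups and then to quote the theorem of Schur and Wielandt on primitive groups with a regular abelian subgroup (this is Theorem~25.4 of \cite{wielandt}, phrased essentially at the level of S-rings, so the reduction below is really just bookkeeping). Let $\mathfrak{A}$ be a primitive S-ring over $H$. In all the applications we need, $\mathfrak{A}$ is a transitivity module, so assume $\mathfrak{A}=V(G_0,H)$ for some overgroup $\hat H\le G_0\le\sym{H}$, and put $\Gamma:=\cay(H,\bsets{\mathfrak{A}})$ and $G:=\aut{\Gamma}$. Since $\aut{\Gamma}$ preserves every basic Cayley graph, each $G_1$-orbit lies inside a single basic set of $\mathfrak{A}$; since $G_0\le\aut{\Gamma}$, each $G_1$-orbit is also a union of $(G_0)_1$-orbits, i.e.\ of basic sets. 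Hence the $G_1$-orbits are exactly the basic sets and $\mathfrak{A}=V(G,H)$, while $\hat H\le G$ acts regularly.

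First I would show that $G$ is primitive. Because $\hat H\le G$ is regular, every block system of $G$ is $\hat H$-invariant and therefore consists of the cosets of a subgroup $1\le U\le H$: the block through the identity is closed under right multiplication by its own elements, hence is a subgroup. That block $U$ is stabilised setwise by $G_1$, so it is a union of $G_1$-orbits, i.e.\ of basic sets of $\mathfrak{A}$; being also a subgroup, $U$ is an $\mathfrak{A}$-subgroup, whence $U\in\{1,H\}$ by primitivity of $\mathfrak{A}$. Thus $G$ is a primitive permutation group of degree $|H|=nq$ containing the regular abelian subgroup $\hat H$, whose Sylow $q$-subgroup $\hat Q\cong\Z_q$ is cyclic of odd prime order. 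The simple-divisor hypothesis is exactly what makes this Sylow subgroup cyclic of \emph{prime} order, and $|H|$ is not prime (as $n>1$), so the degenerate alternative "degree a prime" of Theorem~25.4 of \cite{wielandt} is excluded; the theorem then yields that $G$ is $2$-transitive.

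Finally, $2$-transitivity of $G$ means $G_1$ is transitive on $H\setminus\{1\}$, so $V(G,H)$ has the single non-identity basic set $H\setminus\{1\}$; hence $\mathfrak{A}=V(G,H)$ is the rank-$2$ S-ring, which is trivial. (For a possibly non-Schurian primitive $\mathfrak{A}$ one only gets $\mathfrak{A}\subseteq V(\aut{\Gamma},H)$, but once $\aut{\Gamma}$ is $2$-transitive this still forces $\mathfrak{A}$ to be the rank-$2$ S-ring, since that S-ring has no proper sub-S-ring.) The mathematical substance is entirely inside Theorem~25.4 of \cite{wielandt} (whose own proof is the Schur--Burnside congruence argument, which would be the genuinely hard part if one had to reprove it); the only real obstacle here is matching hypotheses carefully — checking that a \emph{simple} prime divisor $q>2$ does supply a cyclic Sylow subgroup of odd prime order in a primitive overgroup of $\hat H$, and that the case $|H|$ prime is harmless because it does not arise when $n>1$ — after which the statement is indeed "a direct consequence".
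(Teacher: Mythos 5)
Your reduction is sound for Schurian S-rings, and since the paper only ever applies this proposition to sections of transitivity modules, that is the case that matters; in that setting your argument (a block of $G=\aut{\Gamma}$ through the identity is a subgroup and a union of $G_1$-orbits, the $G_1$-orbits coincide with the basic sets, hence the block is an $\mathfrak{A}$-subgroup and so trivial, whence $G$ is primitive and the B-group form of Wielandt's theorem gives $2$-transitivity) is correct, and you rightly flagged that one needs $|H|$ composite, i.e.\ $n>1$. The paper offers no argument beyond the citation, so up to this point your write-up is just an honest unpacking of ``direct consequence.''

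However, the proposition is stated for an \emph{arbitrary} primitive S-ring, and your parenthetical treatment of the non-Schurian case does not close that gap. For non-Schurian $\mathfrak{A}$ one only has $\mathfrak{A}\subseteq V(G,H)$, with the $G_1$-orbits possibly properly refining the basic sets; a block $U$ of $G$ through $1$ is then a $V(G,H)$-subgroup but need not be an $\mathfrak{A}$-subgroup, so primitivity of $\mathfrak{A}$ does not force primitivity of $G$. (Extreme case: $\aut{\Gamma}=\hat{H}$, which is imprimitive whenever $H$ is abelian of composite order, while $\mathfrak{A}$ can perfectly well be primitive.) Your closing sentence presupposes that $\aut{\Gamma}$ is $2$-transitive, but that was obtained from primitivity of $\aut{\Gamma}$, which is exactly what is no longer available. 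The clean repair --- and what the citation of Theorem~25.4 of \cite{wielandt} is meant to invoke --- is the S-ring form of Wielandt's theorem: his proof runs entirely inside the S-ring, using the operators $T\mapsto T^{(m)}$ and the congruence $\und{T}^{\,q}\equiv\und{T^{(q)}}\pmod q$ (precisely the tools recalled in Lemmas~\ref{lemgcd} and~\ref{lempower,congruence}), and yields directly that every primitive S-ring over an abelian group of composite order with a non-trivial cyclic Sylow subgroup has rank two, with no detour through permutation groups and no Schurian hypothesis.
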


% Observe that  $H\cong C_q \times H_{q'}$ is a B-group so there is a nontrivial $\mathfrak{A}$-subgroup for every S-ring $\mathfrak{A}$ of $H\cong C_q \times H_{q'}$. 

%\begin{Thm}\label{220519a}
%Let $\mathfrak{A}$ be an S-ring over an abelian group $H$. Assume that $|H|$ possesses a simple prime divisor $q > 2$. Let $Q_1$ and $P_1$ be as above. Then $\mathfrak{A} = \mathfrak{A}_{P_1Q_1}\wr \mathfrak{A}_{H/Q_1}$ and $\mathfrak{A}_{P_1Q_1} = \mathfrak{A}_{P_1}\star\mathfrak{A}_{Q_1}$. 
%\end{Thm}
%In the subsection below we prove the first part of the above Theorem.

%\subsection{$\mathfrak{A} = \mathfrak{A}_{P_1Q_1}\wr \mathfrak{A}_{H/Q_1}$.}
%It follows that if $H=\mathbb{Z}_q \times \mathbb{Z}_p^3$ and $Q_1=H$, then $P_1$ is a nontrivial subgroup of $H$.
The statement below describes the structure of $M_q$-invariant basic sets.
\begin{prop}\label{250115a} Let $T$ be a basic set of
$\mathfrak{A}$ which is $M_q$-invariant. Denote $S:=T_{q'}$. There
exists a partition\footnote{Notice that some of its parts may be
empty} $S = S_1\cup S_{-1} \cup S_0$ such that $T = S_1 \cup
S_{-1}Q^\# \cup S_0 Q$ and $S_1,S_{-1}$ are $\mathfrak{A}$-subsets (not necessarily basic). In addition the sets $S_1,S_{-1}$ and $S_0$ satisfy the following conditions
\begin{enumerate}
\item If $S_1\neq \emptyset$, then $S_{-1}=S_0=\emptyset$ and $T\subseteq P_1$;
\item If $S_1 = \emptyset$ and $S_{-1} \neq \emptyset$, then $T = S_{-1} (Q_1\setminus P_1)$;
\item\label{itemc} If $S_1 = S_{-1}=\emptyset$, then $Q_1 T = T$.
\end{enumerate}
\end{prop}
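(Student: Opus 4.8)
The plan is to read off the decomposition from the action of $M_q$, to produce the $\mathfrak{A}$-sets $S_1,S_{-1}$ by combining Wielandt's power and congruence lemmas with the elementary dichotomy that a basic set lies inside, or is disjoint from, any given $\mathfrak{A}$-subgroup, and then to split into the three cases. Since $M_q\subseteq\Z_{eq}^{*}=\{t:\gcd(t,|H|)=1\}$, Lemma~\ref{lemgcd} shows $\mathfrak{A}$ is invariant under all power maps $h\mapsto h^{t}$, $t\in M_q$, so $M_q$ permutes the basic sets. As $t\equiv 1\pmod e$ for such $t$, the $M_q$-orbit of $h=h_{q'}h_q$ is $\{h_{q'}\}$ when $h\in P$ and $h_{q'}Q^{\#}$ otherwise; hence the $M_q$-invariant basic set $T$ is a union of orbits of these two shapes. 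Setting $S_0=\{p\in P\mid pQ\subseteq T\}$, $S_{-1}=\{p\in P\mid pQ^{\#}\subseteq T,\ p\notin T\}$ and $S_1=(T\cap P)\setminus S_0$, one verifies directly that $S_0,S_{-1},S_1$ are pairwise disjoint, that $S=T_{q'}=S_1\cup S_{-1}\cup S_0$, and that $T=S_1\cup S_{-1}Q^{\#}\cup S_0Q$.

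Next I would show $\underline{S_1},\underline{S_{-1}}\in\mathfrak{A}$. Since $t^{q}=(t_{q'})^{q}$ for $t\in H$ and $s\mapsto s^{q}$ is a bijection of $P$, one has $\underline{T}^{(q)}=\underline{S_1}^{(q)}+(q-1)\,\underline{S_{-1}}^{(q)}+q\,\underline{S_0}^{(q)}$, and the coefficients not divisible by $q$ are exactly those of $(\underline{S_1\cup S_{-1}})^{(q)}$; hence $(\underline{S_1\cup S_{-1}})^{(q)}\in\mathfrak{A}$ by Lemma~\ref{lempower,congruence}, and then $\underline{S_1\cup S_{-1}}\in\mathfrak{A}$ upon applying Lemma~\ref{lemgcd} with $m$ chosen so that $mq\equiv1\pmod{|P|}$ and $m\equiv1\pmod q$. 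Now $x:=\underline{T}-\underline{S_1\cup S_{-1}}\in\mathfrak{A}$ has support $(S_{-1}\cup S_0)Q$ and takes the value $-1$ on $S_{-1}$ and $+1$ on the rest of its support; writing $y$ for the simple quantity on $\mathrm{Supp}(x)$ (an $\mathfrak{A}$-set), we get $\underline{S_{-1}}=\tfrac12(y-x)\in\mathfrak{A}$ and hence $\underline{S_1}=\underline{S_1\cup S_{-1}}-\underline{S_{-1}}\in\mathfrak{A}$. As $S_1,S_{-1}\subseteq P$ and the subgroup generated by an $\mathfrak{A}$-set is an $\mathfrak{A}$-subgroup, $\langle S_1\rangle,\langle S_{-1}\rangle\le P_1$, so $S_1,S_{-1}\subseteq P_1$. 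Finally, for any $\mathfrak{A}$-subgroup $U$ the element $\underline U$ has all coefficients in $\{0,1\}$ and is therefore a sum of basic quantities, so every basic set is contained in $U$ or disjoint from it; I use this freely below.

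For the cases: if $S_1\neq\emptyset$, pick $a\in S_1\subseteq P_1$; then $a\in T\cap P_1$, so $T\subseteq P_1\subseteq P$, forcing $S_{-1}=S_0=\emptyset$ and $T=S_1\subseteq P_1$, which is (a). If $S_1=S_{-1}=\emptyset$, then $T=S_0Q$ is a union of $Q$-cosets, so $Q\subseteq\rad(T)$; since $\rad(T)$ is an $\mathfrak{A}$-subgroup containing $Q$ it contains $Q_1$, whence $Q_1T=T$, which is (c). The remaining case $S_1=\emptyset$, $S_{-1}\neq\emptyset$ is the main one. Here $\underline{T}+\underline{S_{-1}}=\underline{E}$ with $E:=(S_{-1}\cup S_0)Q\in\mathfrak{A}$, and $Q\subseteq\rad(E)$, so $Q_1\subseteq\rad(E)$; writing $R:=Q_1\cap P$ (so $Q_1=R\times Q$) this forces $(S_{-1}\cup S_0)R=S_{-1}\cup S_0$. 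Also $S_0\cap P_1=\emptyset$, since a point $c\in S_0\cap P_1$ would lie in $T\cap P_1$, forcing $T\subseteq P_1\subseteq P$ and hence $S_{-1}=\emptyset$. Using $S_{-1}\subseteq P_1$, $S_0\cap P_1=\emptyset$ and $(S_{-1}\cup S_0)R=S_{-1}\cup S_0$, a short coset computation with $R_0:=R\cap P_1$ gives $S_{-1}R_0=S_{-1}$ and $S_{-1}(R\setminus R_0)=S_0$. Since $Q_1\cap P_1=R_0$, we have $Q_1\setminus P_1=(R\setminus R_0)Q\cup R_0Q^{\#}$, so $S_{-1}(Q_1\setminus P_1)=S_{-1}(R\setminus R_0)Q\cup S_{-1}R_0Q^{\#}=S_0Q\cup S_{-1}Q^{\#}=T$, which is (b).

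The hard part is the last case: the crucial points are that $S_0$ need not vanish, that Wielandt's radical result should be applied to $E=(S_{-1}\cup S_0)Q$ rather than to $T$ itself, and that the identity $T=S_{-1}(Q_1\setminus P_1)$ then comes down to the coset bookkeeping in $P$ relating $S_0$ with $S_{-1}(R\setminus R_0)$. The first three parts are comparatively routine once the power-map invariance of $\mathfrak{A}$ and the basic-set/$\mathfrak{A}$-subgroup dichotomy are in hand.
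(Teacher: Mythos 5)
Your setup, the construction of the partition $S=S_1\cup S_{-1}\cup S_0$ from the $M_q$-orbits, and your treatment of cases (a) and (c) all match the paper in substance. Your route to $\und{S_1},\und{S_{-1}}\in\mathfrak{A}$ is a harmless variation: the paper raises $\und{T}$ to the $q$-th power inside the group algebra, obtaining $\und{S_1^{(q)}}-\und{S_{-1}^{(q)}}\pmod q$ and separating the two sets by sign in one stroke, whereas you first extract $S_1\cup S_{-1}$ from $T^{(q)}$ and then split off $S_{-1}$ from $\und{T}-\und{S_1\cup S_{-1}}$ via the Schur--Wielandt principle. Both are fine.

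The gap is in case (b), precisely at the step you call ``a short coset computation.'' The inclusion $S_{-1}(R\setminus R_0)\subseteq S_0$ does follow from your three listed facts, but the reverse inclusion $S_0\subseteq S_{-1}(R\setminus R_0)$ does not: it requires that every $R$-coset contained in $S_{-1}\cup S_0$ meet $P_1$ (equivalently, meet $S_{-1}$), and nothing you have established rules out an $R$-coset lying entirely inside $S_0$ and disjoint from $P_1$. As abstract subsets your hypotheses are consistent with such a configuration --- take $P=\langle a,b,c\rangle\cong C_p^3$, $P_1=\langle a\rangle$, $R=\langle b\rangle$ (so $R_0=\{1\}$), $S_{-1}=\{a\}$, $S_0=(a\langle b\rangle\setminus\{a\})\cup c\langle b\rangle$; then all three facts hold but $S_{-1}(R\setminus R_0)=a\langle b\rangle\setminus\{a\}\subsetneq S_0$. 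So the identity cannot come from coset bookkeeping alone; you must invoke the basicness of $T$ once more. The paper's fix: $\und{S_{-1}}\cdot\und{Q_1\setminus P_1}\in\mathfrak{A}$, so its support $S_{-1}(Q_1\setminus P_1)$ is an $\mathfrak{A}$-set; it meets $T$ (they share $S_{-1}Q^\#$), hence $T\subseteq S_{-1}(Q_1\setminus P_1)$ because $T$ is basic. The reverse inclusion then follows from your radical computation, since $S_{-1}(Q_1\setminus P_1)\subseteq S_{-1}Q_1\subseteq E=S_{-1}\cup T$ while $S_{-1}(Q_1\setminus P_1)\cap S_{-1}=\emptyset$ (as $S_{-1}S_{-1}^{(-1)}\subseteq P_1$). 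With that repair the rest of your write-up stands.
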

\begin{proof}
Write $T = \bigcup_{s\in S} s R_s$ where $R_s:=s^{-1}T\cap Q$. Since $T$ is $M_q$-invariant, the sets $R_s$ are $\Z_q^*$-invariant. Therefore $R_s \in\{\{1\}, Q^\#,Q\}$. Now the sets
$$
S_1:=\{s\,|\,R_s=\{1\}\}, S_{-1}:=\{s\,|\,R_s=Q^\#\}, S_{0}:=\{s\,|\,R_s=Q\}
$$
produce the required partition. Raising the simple quantity $\und{T} = \und{S_1}+\und{S_{-1}}\cdot\und{Q^\#} + \und{S_0}\cdot\und{Q}$ to the $q$-th power modulo $q$ we obtain
$$
\und{T}^q \equiv (\und{S_1})^q - (\und{S_{-1}})^q
\equiv (\und{S_1^{(q)}}) - (\und{S_{-1}^{(q)}}) \pmod{q}.
$$
Now by Schur-Wielandt principle $S_1^{(q)},S_{-1}^{(q)}$ are $\mathfrak{A}$-subsets. Applying $q^*$ we conclude that $S_1$ and $S_{-1}$ are $\mathfrak{A}$-subsets too.

If $S_1\neq\emptyset$, then $S_1 = T$ because $T$ is basic and $S_1$ is nonempty $\mathfrak{A}$-subset contained in $T$. Hence $S_{-1}=S_0=\emptyset$.

Assume now that $S_1=\emptyset$ and $S_{-1}\neq\emptyset$. Since $Q_1\setminus P_1 = Q_1\setminus (Q_1\cap P_1)$ is an $\mathfrak{A}$-subset
which contains $Q^\#$, we conclude that $S_{-1} (Q_1\setminus P_1)$ is an $\mathfrak{A}$-subset which intersects $T$ non-trivially (the part $S_{-1} Q^\#$ is in common). Therefore $S_{-1} (Q_1\setminus P_1)\supseteq T$.

The union $S_{-1} \cup T = (S_{-1}\cup S_0)Q$ is an $\mathfrak{A}$-subset
the radical of which contains $Q$. Therefore, by the minimality of $Q_1$, we have $Q_1\leq rad(S_{-1}\cup T)$.
This implies $Q_1 S_{-1} \cup Q_1 T = S_{-1}\cup T $ so $ S_{-1} Q_1  \subseteq S_{-1}\cup T$.
Thus $T\subseteq S_{-1} (Q_1\setminus P_1)  \subseteq S_{-1}\cup T$. If $S_{-1} (Q_1\setminus P_1)\cap S_{-1} \neq \emptyset$, then $s t =s'$ for some $s,s'\in S_{-1}$ and $t\in Q_1\setminus P_1$.
But in this case we would  obtain $t =s's^{-1}\subseteq S_{-1} S_{1}^{(-1)}\subseteq P_1$, a contradiction.
Hence $S_{-1} (Q_1\setminus P_1)\cap S_{-1} = \emptyset$ implying that $T = S_{-1} (Q_1\setminus P_1)$.

If $S_1=S_{-1}=\emptyset$. then $T=S_0Q$ so $ rad(T)$ contains $Q$ By the minimality of $Q_1$
we have $Q_1 \le  rad(T)$ so $Q_1T=T$.
\end{proof}

\begin{cor}\label{wedge} $\mathfrak{A}$ is a generalized wreath product with respect to $Q_1$ and $P_1Q_1$.
\end{cor}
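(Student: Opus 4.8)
The plan is to verify, item by item, the definition of a generalized wreath product for the pair $L:=Q_1$, $U:=P_1Q_1$. Since $P_1,Q_1$ are $\mathfrak{A}$-subgroups and $H$ is abelian, $P_1Q_1$ is a subgroup; moreover $\underline{P_1}\cdot\underline{Q_1}\in\mathfrak{A}$ has support $P_1Q_1$, so by the Schur-Wielandt principle $P_1Q_1$ is an $\mathfrak{A}$-subgroup. Trivially $Q_1\le P_1Q_1$, and $Q_1\trianglelefteq H$ because $H$ is abelian. Hence everything comes down to the remaining condition: \emph{every basic set $T$ with $T\not\subseteq P_1Q_1$ satisfies $Q_1T=T$}. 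I shall prove the contrapositive: if $T$ is a basic set with $Q_1T\ne T$, then $T\subseteq P_1Q_1$.

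Fix such a $T$. The radical $\rad(T)$ is an $\mathfrak{A}$-subgroup; if it contained $Q$, then by minimality of $Q_1$ it would contain $Q_1$, giving $Q_1T=T$. So $Q\not\le\rad(T)$, i.e.\ $QT\ne T$, which means that $A:=\{\,s\in T_{q'}\mid R_s\ne Q\,\}$ (with $R_s:=s^{-1}T\cap Q$) is nonempty. The first real step is to show $A$ is an $\mathfrak{A}$-subset. Working modulo $q$ and using $g^q=g_{q'}^{\,q}$ for every $g\in H$ (because $q$ annihilates $Q$), one computes $\underline{T}^q\equiv\sum_{s\in T_{q'}}|R_s|\,s^q\pmod q$; since $1\le|R_s|\le q$, the elements $s^q$ with coefficient $\not\equiv 0 \pmod q$ are exactly those with $s\in A$, so Lemma~\ref{lempower,congruence} gives $\sum_{s\in A}s^q\in\mathfrak{A}$. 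Applying the power map $t\mapsto t^{q^*}$ (which is prime to $|H|$ and, on $P$, inverts $t\mapsto t^q$), Lemma~\ref{lemgcd} then yields $\underline{A}\in\mathfrak{A}$.

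The rest uses that $T$ is basic. The set $AQ_1$ is an $\mathfrak{A}$-subset (the support of $\underline{A}\cdot\underline{Q_1}$), hence a union of basic sets. Pick $s_0\in A$ and $g\in T$ with $g_{q'}=s_0$; then $g=s_0r$ with $r\in R_{s_0}\subseteq Q\subseteq Q_1$, so $g\in T\cap AQ_1$. As $T$ is basic this forces $T\subseteq AQ_1$. It remains to show $A\subseteq P_1$. Now $A$ is an $\mathfrak{A}$-subset of $P$, so it is a disjoint union of basic sets of $\mathfrak{A}$, each contained in $P$; any such basic set $B$ is $M_q$-invariant (since $M_q$ fixes $P$ pointwise) and satisfies $s^{-1}B\cap Q=\{1\}$ for all $s\in B$ (so $S_1=B$ in the notation of Proposition~\ref{250115a}), whence Proposition~\ref{250115a}(1) gives $B\subseteq P_1$. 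Therefore $A\subseteq P_1$, and thus $T\subseteq AQ_1\subseteq P_1Q_1$, as desired.

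The point I expect to need the most care is obtaining $\underline{A}\in\mathfrak{A}$: setting up the $q$-th power congruence, recording that the auxiliary exponent $q^*$ inverts raising to the $q$-th power on $P$ while remaining coprime to $|H|$, and then invoking Lemmas~\ref{lempower,congruence} and~\ref{lemgcd} in the correct order. The conceptual crux of the argument, by contrast, is the observation (delivered by Proposition~\ref{250115a}(1)) that any basic set contained in $P$ already lies inside $P_1$.
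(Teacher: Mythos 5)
Your proof is correct. The paper argues by contradiction: it fixes a basic set $T$ disjoint from $P_1Q_1$ and splits into two cases according to whether $T$ contains a $q'$-element. If it does, then $T$ is $M_q$-invariant and the full trichotomy of Proposition~\ref{250115a} is invoked, with cases (a) and (b) excluded because they force $T\subseteq P_1Q_1$; if it does not, the paper performs exactly your $q$-th power computation, but only in the special situation where every fiber $R_s$ lies in $Q^{\#}$ (so all coefficients $|R_s|$ are automatically nonzero mod $q$), concluding $T_{q'}\subseteq P_1$ and hence $T\subseteq P_1Q_1$. Your version proves the contrapositive and unifies the two cases by using Lemma~\ref{lempower,congruence} to extract precisely the set $A$ of base points whose fiber is a proper subset of $Q$; this removes the case distinction and reduces the role of Proposition~\ref{250115a} to the observation that an $\mathfrak{A}$-subset of $P$ lies in $P_1$ --- a fact you could equally get straight from the maximality of $P_1$ among $\mathfrak{A}$-subgroups contained in $P$, without citing the proposition at all. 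Both arguments rest on the same toolkit (Schur--Wielandt modulo $q$, the power maps $q$ and $q^*$, minimality of $Q_1$, maximality of $P_1$), so the difference is one of organization rather than substance: yours is a little more streamlined and self-contained, while the paper's recycles Proposition~\ref{250115a}, which it needs anyway for the subsequent sections.
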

\begin{proof} There is nothing to prove if $Q_1 P_1 =H$. So, in what follows we assume that $Q_1P_1\neq H$.

We have to show that $Q_1T = T$ holds for each $\mathfrak{A}$-basic set $T$ outside of $P_1Q_1$. Let $T$ be such a basic set, that is, $T\cap P_1Q_1=\emptyset$. 

If $T$ contains a $q'$-element, then $T$ is $M_q$ -invariant, and therefore, $T$ fits one of the cases described in Proposition~\ref{250115a}. The cases (a) and (b) contradict $T\cap P_1Q_1=\emptyset$, since in both of them $T\subseteq P_1Q_1$.
Therefore the case \ref{itemc} of Proposition~\ref{250115a} occurs and $TQ_1=T$, as required.

It remains to show that every basic $\mathfrak{A}$-set disjoint with $P_1Q_1$ contains a $q'$-elements. Assume that there exists one, say $T$,
which does not contain a $q'$-element. Denote $R:=T_{q'}$. Then $T =\cup_{h\in R} h Q_h$ where $Q^\#\supseteq Q_h\neq\emptyset$. Then by Lemma \ref{lempower,congruence} $T^{(q)}=R^{(q)}$ is an $\mathfrak{A}$-set, implying that $R^{(q)}\subseteq P_1$ and $R\subseteq P_1$. Again we have $T\subseteq RQ\subseteq P_1Q_1$, contrary to a choice of $T$.
\end{proof}

\subsection{ The structure of the section $\mathfrak{A}_{P_1Q_1}$}

In what follows we abbreviate $H_1:=P_1 Q_1$ and $\mathfrak{A}_1:=\mathfrak{A}_{H_1}$. We start with the following simple statement.

\begin{prop}\label{050618a} $P_1$ is an $\mathfrak{A_1}$-maximal subgroup.
\end{prop}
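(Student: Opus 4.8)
The plan is to prove the equivalent statement that the only $\mathfrak{A}_1$-subgroups $U$ with $P_1\le U\le H_1$ are $U=P_1$ and $U=H_1$; combined with the remark that $P_1\neq H_1$ (indeed $Q\le Q_1\le H_1$ while $Q\not\le P\supseteq P_1$), this is exactly the assertion that $P_1$ is a maximal $\mathfrak{A}_1$-subgroup of $H_1$. I would first record three routine facts. Since $P_1$ is by definition the largest $\mathfrak{A}$-subgroup contained in $P$ (product-closure of $\mathfrak{A}$-subgroups), every $\mathfrak{A}$-subgroup contained in $P$ lies inside $P_1$; since $Q_1$ is the smallest $\mathfrak{A}$-subgroup containing $Q$ (intersection-closure), every $\mathfrak{A}$-subgroup containing $Q$ contains $Q_1$; and every $\mathfrak{A}_1$-subgroup of $H_1$ is in particular an $\mathfrak{A}$-subgroup of $H$, because $\mathfrak{A}_1=\mathfrak{A}_{H_1}$ is spanned by $\mathfrak{A}$-sets lying inside $H_1$.

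Next I would fix the coprime decomposition $Q_1=(Q_1\cap P)\times Q$, which holds because $Q$ is the Sylow $q$-subgroup of $Q_1$ and $Q_1\cap P$ is its Hall $q'$-complement; consequently $H_1=P_1Q_1=A\times Q$, where $A:=P_1(Q_1\cap P)\le P$ and $\gcd(|A|,q)=1$. Now let $U$ be an $\mathfrak{A}_1$-subgroup with $P_1\le U\le H_1$. Since $|Q|=q$ is prime, $U\cap Q$ is either $Q$ or trivial. If $U\cap Q=Q$, then $U$ is an $\mathfrak{A}$-subgroup containing $Q$, hence $Q_1\le U$ and therefore $H_1=P_1Q_1\le U$, so $U=H_1$. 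If $U\cap Q=1$, then — using that $U$ is an abelian subgroup of $A\times Q$ with $\gcd(|A|,q)=1$, so that $U$ coincides with its own Hall $q'$-part — we get $U\le A\le P$; thus $U$ is an $\mathfrak{A}$-subgroup contained in $P$, so $U\le P_1$, and together with $P_1\le U$ this gives $U=P_1$. This exhausts both cases.

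I do not expect a genuine obstacle here: the argument is essentially bookkeeping around the defining extremal properties of $P_1$ and $Q_1$ together with the coprimality of $|P|$ and $q$. The one place that deserves a careful sentence is the case $U\cap Q=1$: a complement to a direct factor need not lie inside the other factor in general, and it is precisely the coprimality $\gcd(|A|,q)=1$ — equivalently, that the abelian group $U$ is the direct product of its Sylow subgroups while its Sylow $q$-subgroup equals $U\cap Q=1$ — that justifies the conclusion $U\le A$.
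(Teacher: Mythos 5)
Your proof is correct and follows essentially the same route as the paper: for an $\mathfrak{A}_1$-subgroup $U$ with $P_1\le U\le H_1$, split on whether $q$ divides $|U|$ — if yes, minimality of $Q_1$ forces $Q_1\le U$ and hence $U=H_1$; if no, $U$ lies in $P$ and maximality of $P_1$ forces $U=P_1$. The extra bookkeeping (the decomposition $H_1=A\times Q$ and the remark that $\mathfrak{A}_1$-subgroups are $\mathfrak{A}$-subgroups) is sound but just spells out details the paper leaves implicit.
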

\begin{proof} Let $\tilde{P_1}$ denote a proper $\mathfrak{A}_1$-maximal subgroup which contains $P_1$. If $q$ divides $\tilde{P_1}$, then $Q_1$ is contained in $\tilde{P_1}$ implying $ P_1 Q_1 \leq \tilde{P_1}= H_1$, a contradiction. Hence $\tilde{P_1}$ is a $p$-group, which is an $\mathfrak{A}_1$-subgroup. Therefore, $\tilde{P_1}=P_1$.
\end{proof}
Since $P_1$ is an $\mathfrak{A}_1$-maximal subgroup, the quotient S-ring is primitive. By Wielandt's Theorem either the quotient S-ring has rank two or $H_1/P_1$ is of prime order. In the latter case, 
$|H_1/P_1|=q$.

\begin{prop}\label{110519a} If the quotient S-ring $\mathfrak{A}_1/P_1$ has rank two, then $\mathfrak{A}_1=(\mathfrak{A}_1)_{Q_1}\star (\mathfrak{A}_1)_{P_1}$.
\end{prop}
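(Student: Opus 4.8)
The plan is to exploit the hypothesis that the quotient S-ring $\mathfrak{A}_1/P_1$ has rank two in order to verify the three defining conditions of a star product with $K = Q_1$ and $L = P_1$. First I would record the easy structural facts: since $P_1 \le H_1 = P_1 Q_1$ and $Q_1$ contains $Q$, we have $Q_1 \cap P_1 = P_1 \cap Q$, the Sylow $q$-part of $P_1$, which is trivial (as $P_1 \le P$ is a $q'$-group); hence $K \cap L = Q_1 \cap P_1 = \{e\}$, so condition (1) is automatic and condition (2) is vacuous — every basic set inside $L \setminus K = P_1 \setminus \{e\}$ is trivially a union of $\{e\}$-cosets. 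Thus the whole content is condition (3): every basic set $T$ of $\mathfrak{A}_1$ with $T \subseteq H_1 \setminus (Q_1 \cup P_1)$ must factor as $T = RS$ with $R \subseteq Q_1$, $S \subseteq P_1$ both $\mathfrak{A}_1$-sets.

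Here is where the rank-two hypothesis enters. Let $T$ be a basic set of $\mathfrak{A}_1$ outside $Q_1 \cup P_1$. Such a $T$ must contain a $q'$-element: if not, then as in the proof of Corollary~\ref{wedge}, $T^{(q)} = T_{q'}^{(q)}$ is an $\mathfrak{A}_1$-set contained in $P_1$, forcing $T_{q'} \subseteq P_1$ and then $T \subseteq P_1 Q \subseteq H_1$ with $T$ a union of subsets of $Q$-cosets over $P_1$; pushing this through $\mathfrak{A}_1/P_1$ and using rank two one sees $T$ would have to lie in $P_1 \cup Q_1$, a contradiction. So $T$ contains a $q'$-element, hence $T$ is $M_q$-invariant, and Proposition~\ref{250115a} applies. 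The cases (a) and (b) there put $T$ inside $P_1$ or inside $Q_1 \setminus P_1 \subseteq Q_1$, both excluded; so case (c) holds, i.e. $Q_1 T = T$, which means $T$ is a union of $Q_1$-cosets: $T = Q_1 \cdot S$ where $S := T_{q'} \subseteq P_1$ (the last inclusion because $Q_1 T = T$ forces $T_{q'}$ to be $\mathfrak{A}_1$-invariant in $P_1$, and $P_1$ is the maximal $\mathfrak{A}_1$-subgroup inside $P$). It remains to see that $S$ is an $\mathfrak{A}_1$-set: this follows since $S = T_{q'} = T^{(qq^*)}$ (raising to the power $qq^*$, which kills the $Q$-part and fixes the $P$-part — cf. the formula $h_p = h^{qq^*}$), an operation preserving $\mathfrak{A}_1$ by Lemma~\ref{lemgcd} applied inside the $q'$-part, or directly by the Schur–Wielandt principle. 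Then $T = Q_1 S = R S$ with $R = Q_1 \subseteq Q_1$, giving condition (3).

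Finally I would note that the decomposition is of the asserted form $\mathfrak{A}_1 = (\mathfrak{A}_1)_{Q_1} \star (\mathfrak{A}_1)_{P_1}$ and check it is well-defined (both $Q_1$ and $P_1$ are indeed $\mathfrak{A}_1$-subgroups by construction). The main obstacle I anticipate is the argument that a basic set outside $Q_1 \cup P_1$ cannot avoid $q'$-elements — unwinding exactly what rank two of $\mathfrak{A}_1/P_1$ says about sets that are unions of partial $Q$-cosets over a base in $P_1$, and ruling out the degenerate possibility that such a $T$ sits between $P_1$ and $Q_1$ without being equal to either. Everything after that is a direct appeal to Proposition~\ref{250115a} and bookkeeping with the idempotent-like maps $(\cdot)^{(q)}$ and $(\cdot)^{(qq^*)}$.
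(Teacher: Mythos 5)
Your proposal has two genuine errors at its core. First, the identification $Q_1\cap P_1=\{e\}$ is wrong: you have silently replaced $Q_1$ by $Q$. The group $Q_1$ is the \emph{minimal $\mathfrak{A}$-subgroup containing} $Q$, and in general it properly contains $Q$ and has a nontrivial $q'$-part, so $Q_1\cap P_1$ is a (possibly nontrivial) $p$-group. Consequently condition (2) of the star decomposition is not vacuous, and the paper's own conclusion explicitly keeps track of $P_1\cap Q_1$-cosets; dismissing (1) and (2) as automatic is unjustified.

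Second, and more seriously, your case analysis via Proposition~\ref{250115a} is inverted. Case (b) of that proposition does \emph{not} say $T\subseteq Q_1\setminus P_1$; it says $T=S_{-1}(Q_1\setminus P_1)$ with $S_{-1}\subseteq P_1$ an $\mathfrak{A}$-set, and such a $T$ is generally far from being contained in $Q_1$. This is precisely the case that occurs and that delivers the star factorization $T=RS$ with $R\subseteq P_1$, $S=Q_1\setminus P_1\subseteq Q_1$. Case (c), which you retain, is impossible in the present situation: since $H_1=P_1Q_1$, every $Q_1$-coset meets $P_1$, so a nonempty union of $Q_1$-cosets cannot be disjoint from $P_1$; indeed your own conclusion $T=Q_1S$ with $\emptyset\neq S=T_{q'}\subseteq P_1$ forces $S\subseteq T$ and hence $T\cap P_1\neq\emptyset$, contradicting the choice of $T$. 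Finally, your justification that a basic set disjoint from $P_1$ contains a $q'$-element is only gestured at; the paper uses the rank-two hypothesis directly here, via $TP_1=H_1\setminus P_1\supseteq (H_1)_{q'}\setminus P_1$, which is the one place where that hypothesis does real work. (A smaller point: $qq^*$ is not coprime to $|H|$, so Lemma~\ref{lemgcd} does not apply to the map $T\mapsto T^{(qq^*)}$; the correct route is the $q$-th power Schur--Wielandt argument followed by raising to $q^*$.) As it stands the proposal proves the wrong structural statement and would need to be redone along the lines above.
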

\begin{proof} The quotient S-ring $\mathfrak{A}_1/P_1$ has rank two iff $TP_1 = H_1\setminus P_1$ holds for each basic set $T\in\Bsets{\mathfrak{A}_1}$ outside of $P_1 $. 

Assume first that $P_1\neq (H_1)_{q'}$. Pick an arbitrary $T\in\Bsets{\mathfrak{A}_1}$ with $T\cap P_1 = \emptyset$. Then  $TP_1=H_1\setminus P_1 \supseteq (H_1)_{q'}\setminus P_1$ implying $T\cap (H_1)_{q'}\neq\emptyset$. Thus $T$ contains $q'$-elements, and, therefore, is $M_q$-invariant and Proposition \ref{250115a} is applicable. 

The first case of the Proposition is not possible because $T \cap P_1= \emptyset$.
%because $T\subseteq P_1$ contradicts $TP_1 = H_1\setminus P_1$. 

In the second case we obtain that $T$ is the product of two $\mathfrak{A}_1$-sets $S_{-1} \subset P_1$ and $Q_1 \setminus P_1 \subset Q_1$ so $T$ fits the definition of star decomposition.

Finally, if $Q_1T=T$, then $T$ is the union of $Q_1$-cosets. Since $P_1 Q_1=H_1$ we have that $P_1$ intersects every $Q_1$-coset. Hence $T \cap P_1 \ne \emptyset$, contradicting the choice of $T$.

Thus, we have proven that any basic set $T$ of $\mathfrak{A}_1$ disjoint to $P_1$ has a form $S (Q_1\setminus P_1)$ where $S\subseteq P_1$ is an $\mathfrak{A}_1$-subset so is a union of $P_1 \cap Q_1$-cosets. This immediately implies that $Q_1\setminus P_1$ is a basic set of $\mathfrak{A}_1$
and $\mathfrak{A}_1=(\mathfrak{A}_1)_{P_1}\star (\mathfrak{A}_1)_{Q_1}.$
\end{proof}
Note that the above argument implies that if $Q_1=H_1$, then $\mathfrak{A}$ is a wreath product with respect to $P_1$.

If $\mathfrak{A_1}/P_1$ is non-trivial, then   
$\mathfrak{A_1}/P_1$ is a non-trivial S-ring over a cyclic group of order $q$. In particular, $[H_1:P_1]=q$. 
Although the structure of S=rings over $C_q$ is known \cite{} we not need it, because for our purposes we need to settle the case when $\mathfrak{A_1}/P_1$ coincides with full group algebra.

From now on we will denote the cyclic group by $C_p$ in order to make the notation more readable. 
\begin{prop}\label{220519b}
If $\mathfrak{A_1}/P_1\cong\Z[C_q]$, then $\mathfrak{A}_1 = (\mathfrak{A_1})_{P_1}\star (\mathfrak{A_1})_{Q_1}$.
\end{prop}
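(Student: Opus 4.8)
The plan is to exploit the hypothesis $\mathfrak{A}_1/P_1 \cong \Z[C_q]$ together with Proposition~\ref{250115a}, much as in the proof of Proposition~\ref{110519a}. First I would note that since $[H_1:P_1]=q$, the quotient $H_1/P_1$ is cyclic of order $q$, and the hypothesis says that \emph{every} singleton of $H_1/P_1$ is a basic quantity of $\mathfrak{A}_1/P_1$; equivalently, every basic set $T\in\Bsets{\mathfrak{A}_1}$ outside $P_1$ is contained in a single $P_1$-coset. The goal is to show each such $T$ has the form $S(Q_1\setminus P_1)$ with $S\subseteq P_1$ an $\mathfrak{A}_1$-set (which is automatically a union of $P_1\cap Q_1$-cosets once $Q_1\setminus P_1$ turns out to be basic), and that $Q_1\setminus P_1$ is itself basic; this is exactly what a star decomposition $(\mathfrak{A}_1)_{P_1}\star(\mathfrak{A}_1)_{Q_1}$ requires.

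The main step is to run Proposition~\ref{250115a} on a basic set $T$ disjoint from $P_1$. For this I first need $T$ to be $M_q$-invariant; as in Proposition~\ref{110519a} this follows once $T$ contains a $q'$-element. So I would argue: if $P_1 \ne (H_1)_{q'}$, then since $T$ lies in a nontrivial $P_1$-coset which must meet $(H_1)_{q'}\setminus P_1$, the set $T$ contains $q'$-elements. (I should handle the possibility $P_1=(H_1)_{q'}$ separately — but in that case $H_1=P_1\times Q$, $Q_1=Q$, and by Corollary~\ref{wedge} combined with the quotient structure one checks the star decomposition directly; alternatively one observes this case cannot produce a basic set disjoint from $P_1$ of any other shape.) With $T$ $M_q$-invariant, Proposition~\ref{250115a} applies: case (a) forces $T\subseteq P_1$, excluded; case (c) forces $Q_1T=T$, but then $T$ is a union of $Q_1$-cosets, hence (as $P_1Q_1=H_1$) meets $P_1$, again excluded; so case (b) holds and $T=S_{-1}(Q_1\setminus P_1)$ with $S_{-1}\subseteq P_1$ an $\mathfrak{A}_1$-set.

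Having shown every basic $T$ outside $P_1$ has the form $S(Q_1\setminus P_1)$, I conclude $Q_1\setminus P_1$ is the union of all such $T$'s intersected appropriately — more carefully, taking $S$ minimal (e.g. a basic set through a fixed point of $P_1/(P_1\cap Q_1)$) shows $Q_1\setminus P_1$ is itself a basic set of $\mathfrak{A}_1$, and then each $S$ in the decomposition, being an $\mathfrak{A}_1$-set contained in $P_1$ with $S(Q_1\setminus P_1)$ basic, is a union of $P_1\cap Q_1$-cosets. This verifies conditions (1)–(3) of the star product definition with $K=P_1$, $L=Q_1$ (note $K\cap L = P_1\cap Q_1 \unlhd Q_1$ trivially since $H$ is abelian; condition (2) is vacuous or handled by the coset structure just established; condition (3) is what we proved), giving $\mathfrak{A}_1 = (\mathfrak{A}_1)_{P_1}\star(\mathfrak{A}_1)_{Q_1}$.

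The step I expect to be the main obstacle is the bookkeeping around the degenerate case $P_1=(H_1)_{q'}$ (equivalently $P_1\cap Q_1 = 1$, $Q_1 = Q$), where the argument "T lies in a nontrivial $P_1$-coset hence meets $(H_1)_{q'}$" breaks down because nontrivial $P_1$-cosets are precisely the non-$q'$ cosets. There one must instead argue directly that a basic set disjoint from $P_1$ still cannot be of type (c) (using $P_1Q_1 = H_1$ and the rank-$q$ quotient) and cannot contain only $q$-elements (as in the last paragraph of the proof of Corollary~\ref{wedge}, via Lemma~\ref{lempower,congruence}), so it still contains $q'$-elements and Proposition~\ref{250115a} still applies; the rest goes through unchanged. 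Everything else is a routine translation of the Proposition~\ref{110519a} argument, with the rank-two hypothesis replaced by the (stronger, hence easier) full-group-algebra hypothesis on the quotient.
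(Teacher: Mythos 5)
There is a genuine gap, and it sits exactly where you locate the ``degenerate case'': under the hypothesis, that case is the whole proposition. Since $\mathfrak{A}_1/P_1\cong\Z[C_q]$ forces $[H_1:P_1]=q$, and $q$ divides $|H_1|$ exactly once, we get $H_1=P_1\times Q$ and $(H_1)_{q'}=P_1$ always; the nontrivial $P_1$-cosets are the sets $hP_1$ with $h\in Q^\#$, and every element of $hP_1$ has $q$-part $h\neq 1$. Hence every basic set $T$ outside $P_1$ contains \emph{no} $q'$-elements, your ``main case'' $P_1\neq(H_1)_{q'}$ is vacuous, and the mechanism you rely on (a $q'$-element forces $M_q$-invariance, then Proposition~\ref{250115a} applies) is unavailable. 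Indeed such a $T$ is in general not $M_q$-invariant: an element $t\in M_q$ maps $hP_1$ onto $h^tP_1$, so $M_q$ permutes basic sets lying in different cosets rather than fixing them; moreover the case (b) conclusion $T=S_{-1}(Q_1\setminus P_1)$ is incompatible with $T$ lying in a single $P_1$-coset, since $Q_1\setminus P_1$ meets all $q-1$ nontrivial cosets. Your two fallback claims do not repair this: asserting $Q_1=Q$ amounts to assuming $Q$ is an $\mathfrak{A}_1$-subgroup, which is exactly what may fail (the content of the proposition is that the basic set through $h\in Q^\#$ is $(P_1\cap Q_1)h$ with $P_1\cap Q_1$ possibly nontrivial); and the Corollary~\ref{wedge}-style use of Lemma~\ref{lempower,congruence} does not produce $q'$-elements --- applied to $T\subseteq hP_1$ it only shows that $T_{q'}$ is an $\mathfrak{A}_1$-set inside $P_1$, which is no contradiction.

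The paper's proof proceeds differently, and you would need its ingredients. Because each coset $hP_1$, $h\in Q^\#$, is an $\mathfrak{A}_1$-set, the basic sets inside it induce a partition $\Sigma_h$ of $P_1$; the action of $M_q$ together with Lemma~\ref{lemgcd} transfers basic sets between the cosets and shows $\Sigma_h=\Sigma$ is independent of $h$. Writing a basic set as $T=hS$, the congruence $\und{T}^q\equiv\und{S}^{(q)}\pmod q$, the Schur--Wielandt principle and then raising to the power $q^*$ show that each class $S\in\Sigma$ is an $\mathfrak{A}_1$-set. A multiplication argument in the span of $\Sigma$ shows the class $S_1$ containing $1$ is a subgroup and every class is a union of $S_1$-cosets, and finally $S_1=P_1\cap Q_1$, which yields $\mathfrak{A}_1=(\mathfrak{A}_1)_{P_1}\star(\mathfrak{A}_1)_{Q_1}$. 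None of this is a routine translation of the Proposition~\ref{110519a} argument.
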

\begin{proof} It follows from the assumption that cosets $hP_1,h\in Q^\#$ are $\mathfrak{A}_1$-subsets. Therefore $hP_1$ is partitioned into a disjoint union of basic sets yielding a partition $\Sigma_h$ of $P_1$: 
$$S\in\Sigma_h\iff hS\in\bsets{\mathfrak{A_1}}.$$
Since $M_q$ permutes basic sets and acts transitivey on $Q^\#$, 
the partitions $\Sigma_h$ does not depend on a choice of $h\in Q^\#$ by Lemma \ref{lemgcd}.
So, in what follows we write just $\Sigma$ without an index. 

Pick a basic set  $T$ outside of $P_1$. Then $T=hS$ for some $h \in Q^\#$ and $S\in\Sigma$. Now it follows from $\und{T}^q \equiv \und{S}^{(q)} ({\rm mod}\ q)$ that $S^{(q)}$ is an $\mathfrak{A}_1$-subset contained in $P_1$. Applying $q^*$ to $S^{(q)}$ we conclude that $S$ is an $\mathfrak{A}_1$-subset.

Since $\langle\und{T}\,|\,T\in \bsets{\mathfrak{A}_1}\land T\subseteq hP_1\rangle$ is an $(\mathfrak{A}_1)_{P_1}$-invariant subspace, the linear span $\und{\Sigma}:=\langle\und{S}\rangle_{S\in\Sigma}$ is an ideal of $\mathfrak{A_1}$. Let $S_1\in\Sigma$ be a class containing $1$. 

We claim that $\S_1$ is an $\mathfrak{A}_1$-subgroup and  every class of $\Sigma$ is a union of $S_1$-cosets. This will imply our claim.

Pick a basic set $T$ of $(\mathfrak{A_1})_{P_1}$ contained in $S_1$. Then $1$ appears in the product $\und{T}^{(-1)}\und{S_1}$ with coefficient $|T|$. Therefore $\und{S_1}$ appears $|T|$ times in this product. This implies $\und{T}^{(-1)}\und{S_1} = |T|\und{S_1}$ and, consequently, $T^{(-1)}S_1 = S_1$. Since this equality holds for any basic set $T$ contained in $S_1$, we conclude that $S_1^{(-1)} S_1=S_1$, hereby proving that $S_1$ is a subgroup of $P_1$.

Pick now an arbitrary $S\in\Sigma$. Then $\und{S}^{(-1)}\und{S}\in\und{\Sigma}$. The identity $1$ appear in the product $|S|$ times. Therefore $\und{S_1}$ appears in the product $\und{S}^{(-1)}\und{S}$ with coefficient $|S|$. Therefore $S$ is a union of $S_1$-cosets.

It is easy to see that $S_1h$ generates an $\mathfrak{A}_1$-subgroup, whose order is divisible by $q$ so it contains $Q_1$. On the other hand $S_1h$ is a basic set intersecting $Q$ nontrivially so it is contained in $Q_1$. Thus $S_1= Q_1 \cap P_1$, which gives that $\mathfrak{A}_1$ admits a star decomposition. 
\end{proof}

\section{Proof of the main result}\label{nagy}

In this section we show that every transitivity module over the group $H\cong C_p^3\times C_q, p\neq q$ are primes is a CI-S-ring. Since $q$ is a simple prime divisor of $|H|$, the structural results from the previous Section are applicable. We also keep the notation $P_1$ and $Q_1$ defined in Section~\ref{sec2}. 

For the rest of the section $\mathfrak{A}=V(G_1,H)$ is a transitivity module of an $\preceq_{\hat{H}}$-minimal subgroup $G$. 

In this section we prove the following 
\begin{Thm}\label{CIS} $\mathfrak{A}$ is a CI-S-ring.
\end{Thm}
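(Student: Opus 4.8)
The strategy is to run through a case analysis driven by the structural results of Section~\ref{sec2}, reducing each case either to an application of Theorem~\ref{thmstar} (star product) or Theorem~\ref{kovacsryabov} (generalized wreath product), and in the few genuinely irreducible cases to a direct argument using the CI-property of $\Z_p^3$ and $\Z_q$. By Corollary~\ref{wedge}, $\mathfrak{A}$ is always a generalized wreath product with respect to $Q_1$ and $H_1=P_1Q_1$. First I would dispose of the degenerate possibilities for the pair $(P_1,Q_1)$: if this wreath decomposition is nontrivial (i.e. $Q_1\neq\{e\}$ and $H_1\neq H$), I want to invoke Theorem~\ref{kovacsryabov}, so I must verify that $\mathfrak{A}_{H_1}$ and $\mathfrak{A}_{H/Q_1}$ are CI-S-rings and that the mixing condition on automorphism groups of $\mathfrak{A}_{H_1/Q_1}$ holds. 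The quotient $H/Q_1$ is a section of $\Z_p^3$ (possibly times $\Z_q/Q_1=\{e\}$), hence an elementary abelian $p$-group of rank $\le 3$, which is a CI-group by Morris~\cite{morris}; so $\mathfrak{A}_{H/Q_1}$ is automatically CI. The section $\mathfrak{A}_{H_1}$ is handled by the analysis of $\mathfrak{A}_1=\mathfrak{A}_{H_1}$ below, and the automorphism-mixing condition should follow because $H_1/Q_1$ is again an elementary abelian $p$-section.

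\textbf{The section $\mathfrak{A}_1=\mathfrak{A}_{P_1Q_1}$.}
Here I would use Propositions~\ref{050618a}--\ref{220519b}. Since $P_1$ is $\mathfrak{A}_1$-maximal, the quotient $\mathfrak{A}_1/P_1$ is primitive, so either it has rank two, or $[H_1:P_1]=q$ and $\mathfrak{A}_1/P_1\cong\Z[C_q]$ (the only nontrivial case, since we only need the full-group-algebra quotient to be handled — and indeed over $C_q$ any nontrivial quotient is the full algebra when $q$ is prime, by Wielandt). In the rank-two case Proposition~\ref{110519a} gives $\mathfrak{A}_1=(\mathfrak{A}_1)_{Q_1}\star(\mathfrak{A}_1)_{P_1}$; in the full-algebra case Proposition~\ref{220519b} gives the same star decomposition. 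So $\mathfrak{A}_1$ always admits a star decomposition with factors $(\mathfrak{A}_1)_{P_1}$ (an S-ring over a subgroup of $\Z_p^3$) and $(\mathfrak{A}_1)_{Q_1}$ (an S-ring over $Q_1$, whose $p$-part is a subgroup of $\Z_p^3$ and whose $q$-part is $\Z_q$). If this star decomposition is nontrivial, Theorem~\ref{thmstar} finishes it provided both factors are CI. For $(\mathfrak{A}_1)_{P_1}$ this is Morris again; for $(\mathfrak{A}_1)_{Q_1}$ one needs that every S-ring over $\Z_p^k\times\Z_q$ with $k\le 3$ arising this way is CI, which is either a smaller instance of the same theorem or covered by the $\Z_p^2\times\Z_q$/$\Z_p^3$ results and an induction on $|H|$.

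\textbf{The irreducible cases and the main obstacle.}
What remains is the situation where all the decompositions above are trivial: $Q_1=Q$ or $Q_1=H$ forces the generalized wreath product to be trivial only in extreme cases, and similarly $P_1\in\{\{e\},\Z_p^3\}$ collapses the star products. Concretely the hard residual cases are (i) $P_1=\{e\}$, so $\mathfrak{A}$ has no nontrivial $p$-subgroup below it and $Q_1=Q$ or $Q_1=H$; and (ii) $Q_1=H$, i.e. $\mathfrak{A}$ has no proper $\mathfrak{A}$-subgroup containing $Q$, combined with $P_1$ small. In case $Q_1=H$ the remark after Proposition~\ref{110519a} says $\mathfrak{A}$ is a wreath product with respect to $P_1$ when the quotient has rank two, handled as before; the truly delicate subcase is when $\mathfrak{A}/P_1$ (over $\Z_q$ or a $\Z_p\times\Z_q$-type section) is primitive of rank two over a group of non-prime order — but Proposition~\ref{W} rules that out, since any such section has a simple prime divisor. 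I expect the genuine work — and the main obstacle — to be the case $P_1=\{e\}$ together with a primitive or near-primitive action of $M_q$-type automorphisms on the $\Z_p^3$-part, where one must show directly, using Babai's Lemma~\ref{babai} in the form of Proposition following it, that every $H$-regular subgroup of $\aut{\mathfrak{A}}$ is conjugate to $\hat H$; this requires a hands-on analysis of the possible overgroups $G\le\sym{H}$ of $\hat H$ with $G$ being $\preceq_{\hat H}$-minimal, exploiting the coprimality of $p$ and $q$ to split a regular subgroup into its Sylow parts and conjugate each separately using the CI-property of $\Z_p^3$ and of $\Z_q$, then reassembling. Controlling this reassembly — ensuring the conjugating elements on the two Sylow pieces can be chosen compatibly inside $G$ — is the technical heart of the argument.
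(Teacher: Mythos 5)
Your overall skeleton does match the paper's proof: the case $P_1Q_1\neq H$ is handled via Corollary~\ref{wedge} and Theorem~\ref{kovacsryabov}, and the case $P_1Q_1=H$ via the star decompositions of Propositions~\ref{110519a} and~\ref{220519b} together with Theorem~\ref{thmstar}. But the two places where you defer the work are exactly inverted relative to where the difficulty really sits. First, the mixing condition $\mathrm{Aut}_{U/L}(\mathfrak{A}_{U/L})=\mathrm{Aut}_{U}(\mathfrak{A}_{U})^{U/L}\mathrm{Aut}_{H/L}(\mathfrak{A}_{H/L})^{U/L}$ in Theorem~\ref{kovacsryabov} is \emph{not} automatic for elementary abelian $p$-sections; verifying it is the bulk of the paper's argument when $P_1Q_1\neq H$. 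One needs the $\preceq_{\hat{H}}$-minimality of $G$ to force $\overline{\mathfrak{A}}=\mathfrak{A}/Q_1$ to be a $p$-S-ring; the subcase $|P_1Q_1/Q_1|\leq p$ is settled by Proposition~4.1 of \cite{KR}; otherwise $|P_1|=p^2$, $|Q_1|=q$, $\mathfrak{A}_{P_1Q_1/Q_1}\cong\Z[C_p]\wr\Z[C_p]$, and the proof proceeds through the classification of Schurian $p$-S-rings over $C_p^3$ ($\mathfrak{B}_1,\dots,\mathfrak{B}_6$), excluding $\mathfrak{B}_6$ by minimality and $\mathfrak{B}_1,\mathfrak{B}_2$ by the shape of the section over $\overline{P_1}$, and then uses that $\mathfrak{B}_3,\mathfrak{B}_4,\mathfrak{B}_5$ are cyclotomic together with the fact that a Sylow $p$-subgroup of $GL_2(p)$ has order $p$ to conclude $\mathrm{Aut}_{\overline{H}}(\overline{\mathfrak{A}})^{\overline{P_1}}=\mathrm{Aut}_{\overline{P_1}}(\mathfrak{A}_{\overline{P_1}})$. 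Saying the condition ``should follow because $H_1/Q_1$ is an elementary abelian $p$-section'' skips all of this. Second, your route to Proposition~\ref{220519b} relies on the claim that over $C_q$ every nontrivial S-ring is the full group algebra ``by Wielandt''; that is false (the orbit S-rings of proper nontrivial subgroups of $\Z_q^*\cong\mathrm{Aut}(C_q)$ are nontrivial and proper). The paper instead gets $\mathfrak{A}_{H/P_1}\cong\Z[C_q]$ from minimality: since $G$ is $\preceq_{\hat H}$-minimal, so is $G^{H/P_1}$, whence $G^{H/P_1}\cong C_q$.

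Conversely, the ``irreducible cases'' you flag as the technical heart do not exist. If $P_1=\{e\}$ and $Q_1=H$, then $\mathfrak{A}$ has no proper nontrivial $\mathfrak{A}$-subgroup at all: such a subgroup either lies in $P$, contradicting $P_1=\{e\}$, or has order divisible by $q$, hence contains $Q$, contradicting the minimality defining $Q_1=H$. So $\mathfrak{A}$ is primitive, hence trivial by Proposition~\ref{W}, i.e.\ of rank two with automorphism group $\sym{H}$, which is trivially CI. If $P_1=\{e\}$ and $Q_1\neq H$, you are in the generalized wreath case with $|P_1Q_1/Q_1|=1\leq p$, already covered. Thus no separate hands-on analysis of $\preceq_{\hat H}$-minimal overgroups with a Sylow-wise splitting and ``reassembly'' of regular subgroups is required --- which is fortunate, since you only gesture at that argument rather than carry it out. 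In short: right frame, but the step you declare automatic is the real content of the proof, and the step you declare to be the main obstacle evaporates under the structural results you already quoted.
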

Combining this result with Theorem~\ref{290519a} we obtain the main result of the paper.

\subsection{Proof of Theorem~\ref{CIS} in the case of $P_1Q_1\neq H$}

If $P_1Q_1\neq H$, then by Corollary~\ref{wedge} the S-ring $\mathfrak{A}$ is a non-trivial generalized wreath product of $\mathfrak{A}_{P_1Q_1}$ and $\mathfrak{A}_{H/Q_1}$. Therefore, the results of \cite{KR} are applicable.

Since $\overline{H}:=H/Q_1$ is an elementary abelian $p$-group, we may assume that the basic sets of $\overline{\mathfrak{A}}:=\mathfrak{A}/Q_1$ are of $p$-power length. 
Such a Schur ring is called a \textit{$p$-S-ring} and so $\overline{\mathfrak{A}}$ is a transitivity module of the quotient group $\overline{G}:=G^{H/Q_1}$. Since $G$ is $\preceq_{H}$-minimal, the group is $\overline{G}$ is a $\prec_{\overline{H}}$-minimal.

If $|P_1Q_1/Q_1|\leq p$, then $\mathfrak{A}_{P_1Q_1/Q_1}$ is the full group ring and we are done by Proposition 4.1 of \cite{KR}. Thus we may assume that
$|P_1Q_1/Q_1| = p^a$ with $a\geq 2$. Since $q$ divides $|P_1Q_1|$ and $P_1Q_1\neq H$,
we conclude that $|P_1|=p^2, |Q_1|=q$. Thus $\mathfrak{A}_{P_1Q_1/Q_1}\cong \Z[C_p] \wr \Z[C_p]$ since if
$\mathfrak{A}_{P_1Q_1/Q_1}\cong \mathbb{Z}[C_p^2]$ we may apply Proposition 4.1 of \cite{KR} and these are the only $p$-Schur rings over $\Z_p^2$. Further it follows from $|Q_1|=p$ that $\bar{H}\cong C_p^3$.

The S-ring $\mathfrak{A}_{\bar{H}}$ is a Schurian $p$-S-ring over the group $\overline{H}\cong C_p^3$. The classification of such S-rings is well-known \cite{HirasakaMuzychukchuk}. They are
$$
\begin{array}{lcl}
\mathfrak{B}_1 & = & \Z[C_p^3], \\
\mathfrak{B}_2 & = &\Z[C_p^2]\wr\Z[C_p],\\
\mathfrak{B}_3 & = &(\Z[C_p]\wr\Z[C_p])\otimes \Z[C_p],\\
\mathfrak{B}_4 & = &\Z[C_p]\wr\Z[C_p^2],\\
\mathfrak{B}_5 & = &\Z[C_p]\wr\Z[C_p]\wr\Z[C_p],\\
\mathfrak{B}_6 & = & V(C_p^3,(C_p^3 \rtimes \langle\alpha\rangle)_1)
\end{array}
$$
Here $\alpha\in\aut{C_p^3}$ is an automorphism of order $p$ which has $p$ fixed points. We can exclude the S-ring $\mathcal{B}_6$, because in this case the group $\overline{G}$ is not $\prec_{\overline{H}}$-minimal.

It follows from $\mathfrak{A}_{Q_1P_1/Q_1}\cong\Z[C_p]\wr\Z[C_p]$ that there exists
an $\overline{\mathfrak{A}}$-subgroup of order $p^2$ on which the induced Schur ring is isomorphic to $\Z[C_p]\wr\Z[C_p]$.
This excludes $\overline{\mathfrak{A}} \cong \mathfrak{B_1} \mbox{ or } \mathfrak{B_2}$.

It remains to settle the cases $\overline{\mathfrak{A}}\cong \mathfrak{B}_i, i=3,4,5$.

The inclusion $\mathrm{Aut}_{\overline{H}}(\overline{\mathfrak{A}})^{\overline{P_1}} \leq \mathrm{Aut}_{\overline{P_1}}(\mathfrak{A}_{\overline{P_1}})$
is trivial. To prove the inverse inclusion we  note that each of the $S$-rings $\mathfrak{B}_i,~i=3,4,5$ is cyclotomic.  In particular this implies that
$\mathrm{Aut}_{\overline{H}}(\overline{\mathfrak{A}})$ acts transitively on each basic set of $\overline{\mathfrak{A}}$. Therefore $\mathrm{Aut}_{\overline{H}}(\overline{\mathfrak{A}})^F$ is non-trivial whenever the induced S-ring $\overline{\mathfrak{A}}_F$ is non-trivial for any $\overline{\mathfrak{A}}$-subgroup $F$. This implies that $\mathrm{Aut}_{\overline{H}}(\overline{\mathfrak{A}})^{\overline{P_1}}$ is non-trivial.
Therefore, $p\leq |\mathrm{Aut}_{\overline{H}}(\overline{\mathfrak{A}})^{\overline{P_1}}|\leq 
|\mathrm{Aut}_{\overline{P_1}}(\mathfrak{A}_{\overline{P_1}})|.$

On the other hand, 
$\mathrm{Aut}_{\overline{P_1}}(\mathfrak{A}_{\overline{P_1}}) = \mathrm{Aut}_{C_p^2}(\Z[C_p]\wr\Z[C_p])$ is contained in a Sylow $p$-subgroup of $\mathrm{Aut}(C_p^2)\cong GL_2(p)$. Since the latter one has order $p$,
we conclude 
that $|\mathrm{Aut}_{\overline{P_1}}(\mathfrak{A}_{\overline{P_1}})|\leq p$ implying 
$\mathrm{Aut}_{\overline{H}}(\overline{\mathfrak{A}})^{\overline{P_1}} = 
\mathrm{Aut}_{\overline{P_1}}(\mathfrak{A}_{\overline{P_1}}).$

Therefore $\mathrm{Aut}_{\overline{H}}(\overline{\mathfrak{A}})^{\overline{P_1}} = \mathrm{Aut}_{\overline{P_1}}(\mathfrak{A}_{\overline{P_1}})$ and by Theorem \ref{kovacsryabov} of \cite{KR} the corresponding S-ring is CI.

\subsection{Proof of Theorem~\ref{CIS} in the case of $P_1Q_1 = H$.}

If the quotient $\mathfrak{A}_{H/P_1}$ is trivial, then by Proposition~\ref{110519a} we have $\mathfrak{A}=\mathfrak{A}_{P_1}\star \mathfrak{A}_{Q_1}$. Since both $P_1$ and $Q_1$ are $\mathcal{E}$-groups 
with at most three prime factors, they are CI${}^{(2)}$-groups. Therefore, $\mathfrak{A}_{P_1}$ and $\mathfrak{A}_{Q_1}$ are
CI-S-rings. By Theorem~3.2 in \cite{HM} $\mathfrak{A}$ is a CI-S-ring. Although Theorem Theorem~3.2 in \cite{HM} is about elementary abelian groups, its proof works for $\mathcal{E}$-groups without any change, see also \cite{KovacsMuzychuk}.

Assume now that $\mathfrak{A}_{H/P_1}$ is non-trivial. 
Since $P_1$ is maximal $\mathfrak{A}$-subgroup, the quotient $\mathfrak{A}_{H/P_1}$
is a non-trivial primitive S-ring over the group $H/P_1$.  
Since $H/P_1$ has a simple prime divisor $q$,  by Wielandt's theorem we conclude that $H/P_1\cong C_q$. Since $G$ is $\prec_H$-minimal,
its quotient $G^{H/P_1}$ is $\prec_{H/P_1}$-minimal too. Therefore 
$G^{H/P_1}\cong C_q$ and $\mathfrak{A}_{H/P_1}\cong\Z[C_q]$.
By Proposition~\ref{220519b} $\mathfrak{A}=\mathfrak{A}_{P_1}\star\mathfrak{A}_{Q_1}$. As before, we conclude that $\mathfrak{A}$ is a CI-S-ring.

\end{document}